\newtheorem{lemma}{Lemma}
\newtheorem{theorem}{Theorem}[section]
\newtheorem{example}{Example}[section]
\newtheorem{corollary}{Corollary}[section]
\newtheorem{remark}{Remark}[section]
\DeclareMathOperator{\real}{Re}
\begin{document}

\title{On Lemniscate starlikeness of analytic functions and its application to special functions}

\author{Saiful R. Mondal, Ahmad K. Al Abdulaali}
\address{Department of Mathematics and Statistics, Collage of Science, King Faisal University, Al-Hasa 31982, Hofuf, Saudi Arabia}

\keywords{
Lemniscate Starlike, Bessel, Struve, Confluent Hypergeometric
}


\begin{abstract}
This paper investigates the lemniscate starlikeness of analytic functions by deriving specific conditions on their power series coefficients. The study utilizes the Cauchy product of power series along with key inequalities involving the Pochhammer symbol and the Gamma function. The derived results are further applied to a number of special functions, providing parameter restrictions under which these functions become lemniscate starlike. The findings extend and refine several earlier results in this domain.  
\end{abstract}


\maketitle


\section{Introduction}\label{intro}
We consider the analytic and univalent functions $f$ defined in the unit disk $\mathcal{U} = \{ z \in \mathbb{C} : |z| < 1 \}$ 
and normalized by $f(0) = 0, \quad f'(0) = 1$. We denote such classes by $\mathcal{S}$. For a function $f \in \mathcal{S}$ have the power series form 
\begin{align}\label{eqn:power-series}
    f(z)= z + \sum_{k=2}^{\infty} a_k z^k.
\end{align}

Given $0 \leq \alpha < 1$, $\mathcal{S}^*(\alpha)$, the class consisting of functions $f \in \mathcal{S} $ with analytic charaterization 
\[
\real \left\{ \frac{z f'(z)}{f(z)} \right\} > \alpha \quad (z \in \mathcal{U}),
\]
is known as starlike of order $\alpha$. Similarly, $f$ is said to be in the class of functions \emph{convex of order $\alpha$}, denoted by $K(\alpha)$, if
\[
\real \left\{ 1 + \frac{z f''(z)}{f'(z)} \right\} > \alpha \quad (z \in \mathcal{U}).
\]
 For two analytic functions \(f\) and \(g\) defined on \(\mathbb{D}\), the function \(f\) is said to be \textit{subordinate} to \(g\), denoted by \(f \prec g\), if there exists an analytic function \(w: \mathbb{D} \to \mathbb{D}\) satisfying \(w(0) = 0\) and \(f(z) = g(w(z))\). Moreover, if the function \(g\) is univalent, then \(f \prec g\) if and only if \(f(0) = g(0)\) and \(f(\mathbb{D}) \subset g(\mathbb{D})\).
	
	Consider the function \(\phi_l(z) := \sqrt{1+z}\), which maps \(\mathbb{D}\) univalently onto the region illustrated in Figure \ref{fig:lem-disc}. This region is commonly referred to as the \textit{lemniscate domain}; see \cite{sokol1996radius,sokol2018further}. 

\begin{figure}
    \centering
    \includegraphics[width=0.5\linewidth]{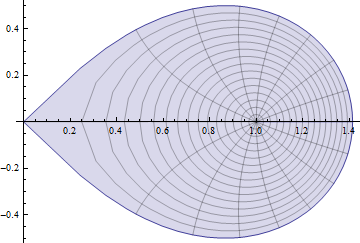}
    \caption{Image of $\phi_l(\mathbb{D})$. }
    \label{fig:lem-disc}
\end{figure}


A function $f \in \mathcal{S}$ is lemniscate starlike if  $\frac{z f'(z)}{f(z)} \prec \sqrt{1+  z}$. This is equivalent to say 
\begin{align}
  \left| \frac{(z f'(z))^2}{(f(z))^2} -1 \right|< 1. 
\end{align}
The lemniscate starlike can be further refined by considering $\frac{z f'(z)}{f(z)} \prec \sqrt{1+ c z}$, $0< c \le 1$. We denote this class as $\mathcal{S}^{\ast}(q_c)$. The class  $\mathcal{S}^{\ast}(q_c)$ is the main attraction of our study. This class was introduced and studied in \cite{sokol2018further}.

The \emph{cross-product} (Cauchy product) of the power series 
\[
g_1(z) = \sum_{n=0}^{\infty} a_n z^n 
\quad \text{and} \quad 
g_2(z) = \sum_{n=0}^{\infty} b_n z^n,
\]
which serves as the main analytical tool in this study, is defined by
\[
g_3(z) = \sum_{n=0}^{\infty} c_n z^n, 
\quad \text{where} \quad 
c_n = \sum_{j=0}^{n} a_j b_{\,n-j}.
\]

In this study, we derive the conditions on the coefficient \( a_k \) of the power series \eqref{eqn:power-series} for a function \( f \in \mathcal{S} \) such that \( f \in \mathcal{S}^{\ast}(q_c) \), where \( c \in (0,1] \). The main result in this regard is presented in \Cref{sec:Main-result}. This result is further applied to several special functions, and the corresponding outcomes are discussed in various subsections of \Cref{sec:Appl-special}. Specifically, \Cref{sec:Appl-exponential} provides examples involving exponential functions, while \Cref{sec:Appl-conf-hyp} addresses the case of confluent hypergeometric functions. The lemniscate starlikeness of the Bessel and Struve functions is analyzed in \Cref{sec:Appl-Bessel} and \Cref{sec:Appl-Struve}, respectively. The result related to Bessel functions refines and extends the findings reported in \cite{Zayed-Bulboacua}. The product of Bessel functions is examined in \Cref{sec:Appl-CP-Bessel}, and an illustrative example involving the error function is discussed in \Cref{sec:Appl-error}. For clarity and readability, a brief literature review of each special function under consideration is included in its respective subsection.

Moreover, several fundamental results involving Pochhammer symbols and Gamma functions, which play an essential role in the development of the main theorem and its applications, are provided in \Cref{sec:fundamental}. Finally, the study concludes by listing several additional special functions that can also be investigated using the established main result. Owing to the similarity of the arguments, detailed computations for these cases are omitted.

\section{Fundamental results}\label{sec:fundamental}
We begin this section by proving certain fundamental results necessary for the framework of our study. Furthermore, we state auxiliary results from the literature that will facilitate the application of the main theorems obtained herein.

\begin{lemma}\label{lemma-basic1}
    For $ n \in N$, 
    \begin{align}\label{eqn:lemma-basic1}
    \frac{n^2}{4}+n\le \left(\frac{21}{4}\right)^{\frac{n}{3}}.
    \end{align}
    The equality holds at $n=3$.
\end{lemma}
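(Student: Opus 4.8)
The plan is to split the claim into the verification of a few small cases and an induction that disposes of all large $n$. First I would record the exact equality at $n=3$: since $(21/4)^{3/3}=21/4$ and $\tfrac{9}{4}+3=\tfrac{21}{4}$, the two sides coincide, which both proves the stated equality and supplies the base case of the induction. For $n=1$ and $n=2$ I would verify the inequality by cubing, which turns the awkward fractional exponent into a clean rational comparison: for $n=2$ the claim $3\le(21/4)^{2/3}$ is equivalent to $27\le(21/4)^2=441/16$, which holds because $27=432/16$; for $n=1$ it reduces similarly to $(5/4)^3=125/64\le 336/64=21/4$. These cover the (numerically delicate) case $n=2$ without any reliance on decimal estimates.

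For $n\ge 3$ I would argue by induction on $n$, writing $a:=(21/4)^{1/3}$ so that the right-hand side is $a^n$. Assuming $\tfrac{n^2}{4}+n\le a^n$, I would estimate
\[
a^{n+1}=a\,a^n\ge a\left(\frac{n^2}{4}+n\right),
\]
so it suffices to show $a\bigl(\tfrac{n^2}{4}+n\bigr)\ge \tfrac{(n+1)^2}{4}+(n+1)$ for $n\ge 3$. To keep the computation rational I would replace $a$ by the smaller bound $17/10$, justified by $(17/10)^3=4913/1000<5250/1000=21/4$; since $\tfrac{n^2}{4}+n>0$, the step reduces to the polynomial inequality $\tfrac{17}{10}\bigl(\tfrac{n^2}{4}+n\bigr)\ge \tfrac{(n+1)^2}{4}+(n+1)$, i.e. $7n^2+8n-50\ge 0$ after clearing denominators. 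The quadratic $7n^2+8n-50$ is increasing for $n\ge 0$ and equals $37$ at $n=3$, so it is positive for every $n\ge 3$, which completes the induction.

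The point to watch — and the reason the argument must be genuinely discrete — is that the continuous analogue of the inequality is \emph{false}: the function $x\mapsto(21/4)^{x/3}-\tfrac{x^2}{4}-x$ is negative for real $x$ slightly below $3$ (near $x=2.9$ it is about $-0.035$), so no monotonicity or single-minimum calculus argument on $[1,\infty)$ can succeed. The induction sidesteps this precisely because it only ever compares the two sides at consecutive integers, where the exponential factor $a$ has already bought enough multiplicative room. The main obstacle is therefore the tightness at $n=2$, whose margin is tiny ($3$ versus roughly $3.02$); handling it through the exact rational comparison $432/16\le 441/16$ is what makes the step safe.
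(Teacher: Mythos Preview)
Your proof is correct, and it follows the same overall architecture as the paper's --- check $n=1,2,3$ directly, then induct from $n=3$ --- but your execution of both halves is genuinely cleaner. For the small cases the paper simply quotes decimal values (e.g.\ $3$ versus $3.02069$ at $n=2$), whereas you cube and compare rationals exactly, which is important precisely because the $n=2$ margin is so thin. For the induction step the paper argues additively: it writes $\frac{(m+1)^2}{4}+(m+1)=\bigl(\frac{m^2}{4}+m\bigr)+\frac{m}{2}+\frac{5}{4}$, invokes the hypothesis, and then must show $(21/4)^{(m+1)/3}-(21/4)^{m/3}>\frac{m}{2}+\frac{5}{4}$, which it does via a derivative computation and a numerical evaluation at $m=3$. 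Your multiplicative step $a^{n+1}\ge a\bigl(\frac{n^2}{4}+n\bigr)$ together with the rational underestimate $a>17/10$ collapses everything to the polynomial inequality $7n^2+8n-50\ge 0$, verified exactly at $n=3$; this avoids both the calculus and the floating-point check. Your closing remark that the real-variable inequality fails just below $x=3$ is a nice diagnostic the paper does not mention, and it explains why neither argument can be replaced by a one-line monotonicity claim.
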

\begin{proof}
First, we test the cases numerically for $n=1, 2,$ and $n=3$.
\begin{enumerate}
\item $n=1$: $\frac{n^2}{4}+n= 1.25$ , $\left(\frac{21}{4}\right)^{\frac{n}{3}}= 1.73801$
\item  $n=2$ : $\frac{n^2}{4}+n= 3$ , $\left(\frac{21}{4}\right)^{\frac{n}{3}}= 3.02069$
\item  $n=3$ : $\frac{n^2}{4}+n= 5.25$ , $\left(\frac{21}{4}\right)^{\frac{n}{3}}= 5.25$. 
\end{enumerate}
Clearly, the inequality \eqref{eqn:lemma-basic1} holds for $n=1, 2$,  and both sides are equal for $n=3$. 

Suppose that the inequality \eqref{eqn:lemma-basic1} is true for $m=n \ge 3$, that is, 
\begin{align*}
    \frac{m^2}{4}+m\le \left(\frac{21}{4}\right)^{\frac{m}{3}}.
\end{align*}
Now,
\begin{align*}
    \frac{(m+1)^2}{4}+(m+1)&=\frac{m^2}{4}+ \frac{3 m}{2} +\frac{5}{4}<\left(\frac{21}{4}\right)^{\frac{m}{3}}+\frac{ m}{2}+\frac{5}{4}<\left(\frac{21}{4}\right)^{\frac{m+1}{3}}.
\end{align*}
To prove the last part of the inequality, for real $x \geq 3$, define the function
\[
H(x) = \left(\frac{21}{4}\right)^{\tfrac{x+1}{3}} - \left(\frac{21}{4}\right)^{\tfrac{x}{3}} - \frac{x}{2} - \frac{5}{4}.
\]
Clearly, the inequality $H(x) >0$ is equivalent to showing $H(m) > 0$ for all integers $m \geq 3$.

\medskip
The derivatives of $H(x)$

\[
H'(x) = \frac{\ln(21/4)}{3}\,\left(\frac{21}{4}\right)^{\tfrac{x}{3}}
\left(\left(\frac{21}{4}\right)^{1/3}-1\right) - \frac{1}{2}.
\]

\medskip

Let $C = \left(\tfrac{21}{4}\right)^{1/3} - 1 \approx 0.73$ and 
$B = \tfrac{\ln(21/4)}{3} \approx 0.55$. Then
\[
H'(x) = BC \left(\frac{21}{4}\right)^{x/3} - \frac{1}{2}.
\]
Since $\tfrac{21}{4} > 1$, the term $\left(\tfrac{21}{4}\right)^{x/3}$ grows exponentially with $x$. 
Thus for $x \geq 3$, $H'(x)$ is strictly positive. Therefore $H(x)$ is strictly increasing on $[3,\infty)$.

\medskip

Numerically,
\[
H(3) = \left(\frac{21}{4}\right)^{4/3} - \left(\frac{21}{4}\right) - \frac{11}{4} \approx 1.08 > 0.
\]

\medskip

Since $H(3) > 0$ and $H(x)$ is increasing for $x \geq 3$, it follows that
\[
H(m) > 0 \quad \text{for all integers } m \geq 3.
\]
This proves the claim. 
\end{proof}

\begin{lemma}\label{lemma-basic2}
    For $k \in [1,n]$ for a fixed $2 \le n \in N$, 
    \begin{align}
     n(k+1) -k^2 \le \left(\frac{21}{4}\right)^{\frac{n}{3}}.
    \end{align}
\end{lemma}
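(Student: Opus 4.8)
The plan is to reduce this inequality to \Cref{lemma-basic1} by maximizing the left-hand side as a function of $k$. First I would fix $n \ge 2$ and regard $g(k) := n(k+1) - k^2$ as a quadratic in the real variable $k$. Since the coefficient of $k^2$ is $-1 < 0$, the graph of $g$ is a downward-opening parabola, so $g$ attains its maximum at its vertex rather than at an endpoint of the interval.

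Next I would locate the vertex. Computing $g'(k) = n - 2k$ and setting it to zero gives $k = n/2$. Substituting back yields
\[
g\!\left(\frac{n}{2}\right) = n\!\left(\frac{n}{2}+1\right) - \frac{n^2}{4} = \frac{n^2}{4} + n.
\]
Hence for every real $k$ one has $g(k) \le \frac{n^2}{4} + n$, and in particular this bound holds for all admissible $k \in [1,n]$.

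The one step that requires care — and the closest thing to an obstacle — is verifying that the maximizer $k = n/2$ actually lies in the prescribed interval $[1,n]$, since otherwise the supremum of $g$ over $[1,n]$ would be attained at an endpoint and might exceed $\tfrac{n^2}{4}+n$. For $n \ge 2$, however, we have $n/2 \ge 1$ and $n/2 \le n$, so the vertex belongs to $[1,n]$ and the quantity $\frac{n^2}{4}+n$ is genuinely the maximum of $g$ over the interval. Even if $k$ is restricted to integer values, the same continuous bound still applies.

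Finally, I would invoke \Cref{lemma-basic1}, which gives $\frac{n^2}{4} + n \le \left(\frac{21}{4}\right)^{n/3}$ for all $n \in \mathbb{N}$. Chaining the two inequalities produces
\[
n(k+1) - k^2 \le \frac{n^2}{4} + n \le \left(\frac{21}{4}\right)^{\frac{n}{3}},
\]
which is exactly the claimed estimate. The entire argument thus hinges on the single observation that the $k$-maximum of the left-hand side coincides precisely with the left-hand side of the previously established \Cref{lemma-basic1}, so no new analytic input beyond that lemma is needed.
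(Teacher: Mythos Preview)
Your proposal is correct and follows essentially the same approach as the paper: both maximize the quadratic $n(k+1)-k^2$ over $k$ by locating the vertex at $k=n/2$, evaluate the maximum as $\tfrac{n^2}{4}+n$, and then apply \Cref{lemma-basic1}. Your version is slightly more careful in verifying that the vertex $k=n/2$ actually lies in $[1,n]$ for $n\ge 2$, a point the paper leaves implicit.
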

\begin{proof}
    First, denote the function
    \begin{align}
        \Psi_n(k):= n(k+1) - k^2.
    \end{align}
    It is straightforward to verify that 
\[
\Psi_n(k) = k(n-k) + n > 0, \quad \forall\, n \in \mathbb{N}, \; k \in [1,n].
\]
Differentiating with respect to $k$ yields
\[
\Psi_n'(k) = n - 2k,
\]
which vanishes at $k = \tfrac{n}{2}$. Furthermore,
\[
\Psi_n''(k) = -2 < 0,
\]
demonstrating that $\Psi$ is strictly concave and hence attains its maximum at $k = \tfrac{n}{2}$. Consequently, the maximal value of $\Psi$ is given by
\begin{equation}
    \Psi_n\!\left(\tfrac{n}{2}\right) = \frac{n^{2}}{4} + n. 
\end{equation}
From \Cref{lemma-basic1}, it follows that 
\[\Psi_n\!(k) = \frac{n^{2}}{4} + n \leq \left(\frac{21}{4}\right)^{\frac{n}{3}}. \]
This completes the proof. \end{proof}

We need the following basic information about the generalized hypergeometric function: The {generalized hypergeometric function} is denoted by
\[
{}_pF_q(a_1, \ldots, a_p; b_1, \ldots, b_q; z)
\]
and is defined by the power series:
\[
{}_pF_q(a_1, \ldots, a_p; b_1, \ldots, b_q; z) = \sum_{n=0}^{\infty} \frac{(a_1)_n \cdots (a_p)_n}{(b_1)_n \cdots (b_q)_n} \cdot \frac{z^n}{n!},
\]
where $(a)_n=a(a+1)\ldots (a+n-1)$ is the well-known Pochammer symbol \cite{Rainville1971}. The convergence of the generalized hypergeometric series depends on the relation between $p$ and $q$:

\begin{itemize}
    \item \textbf{If $p \leq q$:} The series converges for all finite $z \in \mathbb{C}$. Hence, the function is entire.
    
    \item \textbf{If $p = q + 1$:} The series converges for $|z| < 1$. It may also converge at $|z| = 1$ under the condition:
    \[
    \real\left(\sum_{j=1}^q b_j - \sum_{i=1}^p a_i\right) > 0.
    \]
    
    \item \textbf{If $p > q + 1$:} The series diverges for any $z \neq 0$, except in special cases that allow analytic continuation.
\end{itemize}

 Denote $\mathbb{Z}_0^{-}=\{\ldots, -3, -2, -1, 0\}$. The following identity for $b \in \mathbb{R}\setminus \mathbb{Z}_0^{-}$ is useful in the sequel. 
\begin{lemma}\label{lem:identity-1}
For  $b \in \mathbb{R}\setminus \mathbb{Z}_0^{-}$, 
\begin{align}\label{eqn-lem:identity-1}
    \sum_{k=0}^{n} \frac{1}{k!\,(b)_k (n-k)!\,(b)_{n-k}}= \dfrac{2^{2n} \left(b-\tfrac{1}{2}\right)_n}{n! (b)_n (2b-1)_n}.
\end{align}

\end{lemma}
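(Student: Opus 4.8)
The plan is to prove the identity
\[
\sum_{k=0}^{n} \frac{1}{k!\,(b)_k\,(n-k)!\,(b)_{n-k}}
= \frac{2^{2n}\left(b-\tfrac12\right)_n}{n!\,(b)_n\,(2b-1)_n}
\]
by recognizing the left-hand side as a Cauchy product coefficient. Observe that the generating function $g(z)=\sum_{k=0}^\infty \frac{z^k}{k!\,(b)_k}$ is, up to the trivial factor in the $z^0$ term, precisely a ${}_0F_1(\,;b;z)$ hypergeometric series. The left-hand sum is exactly the $n$-th coefficient $c_n$ in the Cauchy product $g(z)\cdot g(z)=g(z)^2$, since $c_n=\sum_{k=0}^n \frac{1}{k!\,(b)_k}\cdot\frac{1}{(n-k)!\,(b)_{n-k}}$. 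So the identity reduces to identifying the coefficients of $\bigl({}_0F_1(\,;b;z)\bigr)^2$.

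The key step is a classical quadratic transformation. The square of a ${}_0F_1$ can be written in closed form as a ${}_1F_2$ (or equivalently a ${}_2F_3$ with a contiguous collapse); concretely, one has
\[
\left(\sum_{k=0}^{\infty}\frac{z^k}{k!\,(b)_k}\right)^{2}
= \sum_{n=0}^{\infty}\frac{\left(b-\tfrac12\right)_n}{n!\,(b)_n\,(2b-1)_n}\,(4z)^n,
\]
and matching the coefficient of $z^n$ on both sides against $c_n$ yields the claimed formula, with the $2^{2n}=4^n$ arising from the $(4z)^n$. I would prove this generating-function identity either by invoking the known product formula for Bessel-type functions (the function ${}_0F_1(\,;b;z)$ is, after rescaling, a Bessel function $J_{b-1}$, and $\bigl({}_0F_1\bigr)^2$ has a standard series expansion via the Clausen-type or Bessel-product formula), or self-containedly by verifying that both sides satisfy the same third-order linear ODE with matching initial coefficients.

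Rather than rely on a quotable product formula, the cleanest self-contained route is to establish the coefficient identity directly by a binomial/duplication argument. Writing out $c_n$ and clearing the factorials, one converts the sum into $\frac{1}{(n!)^2}\sum_{k=0}^n \binom{n}{k}^2 \frac{(n!)^2}{k!(n-k)!\,(b)_k(b)_{n-k}}$-type expressions; the Pochhammer factors $(b)_k(b)_{n-k}$ are then handled using the duplication formula $(b)_k(b+\tfrac12)_k = 4^{-k}(2b)_{2k}$ together with the relation $(2b-1)_n$ appearing on the right. In practice the most efficient formulation is to show that the normalized ratio $c_n \big/ \bigl[\text{RHS}\bigr]$ equals $1$ by induction on $n$, reducing each inductive step to the elementary recurrences $(b)_{n+1}=(b)_n(b+n)$ and $(2b-1)_{n+1}=(2b-1)_n(2b-1+n)$, and checking the base case $n=0$ (where both sides equal $1$).

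The main obstacle I anticipate is the bookkeeping in the inductive step: the left-hand Cauchy sum does not telescope cleanly under $n\mapsto n+1$, so a direct induction on the raw sum is awkward. For this reason I expect the generating-function approach to be decisively simpler — once the square ${}_0F_1(\,;b;z)^2={}_1F_2\!\left(b-\tfrac12;\,b,\,2b-1;\,4z\right)$ is in hand, the result is immediate by comparing coefficients. Thus the real work is justifying that quadratic product formula for ${}_0F_1$, which is where I would focus the argument, either by citing the standard Bessel-function product identity or by the ODE-matching verification sketched above.
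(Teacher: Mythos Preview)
Your generating-function reformulation is correct: the left side is exactly the $n$-th Cauchy coefficient of $\bigl({}_0F_1(\,;b;z)\bigr)^2$, and the identity $\bigl({}_0F_1(\,;b;z)\bigr)^2={}_1F_2\!\left(b-\tfrac12;\,b,\,2b-1;\,4z\right)$ would give the lemma immediately on comparing coefficients. However, the paper takes a different and more self-contained route. Instead of packaging the sum as a power-series square, the paper manipulates the finite sum directly: using $\tfrac{n!}{(n-k)!}=(-1)^k(-n)_k$ and $(b)_{n-k}=(b)_n/(b+n-k)_k$ together with $(-1)^k(b+n-k)_k=(1-b-n)_k$, it rewrites the sum as $\tfrac{1}{n!\,(b)_n}\,{}_2F_1(-n,\,1-b-n;\,b;\,1)$ and then applies the Chu--Vandermonde summation ${}_2F_1(-n,\alpha;\beta;1)=(\beta-\alpha)_n/(\beta)_n$, followed by the Pochhammer duplication $(2b-1)_{2n}=2^{2n}(b-\tfrac12)_n(b)_n$. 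The trade-off is this: your approach is conceptually clean but pushes all the work into the ${}_0F_1$-squaring formula, which you do not prove and which, if done from scratch (ODE matching or otherwise), is comparable in effort to the lemma itself; the paper's approach stays entirely at the level of finite-sum identities and relies only on Chu--Vandermonde, which is a standard closed-form result. Your induction sketch, as you yourself note, does not telescope and would be the hardest of the three options to make rigorous.
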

\begin{proof}
    To prove the identity, we need a few basic relations. 
    For $0\le k \le n$,
    \begin{align*}
        &\frac{n!}{(n-k)!}= n(n-1) (n-2)\ldots(n-k+1)= (-1)^k (-n)_k, \\
       & (b)_{n-k}(b+n-k)_k=(b)_n \implies  (b)_{n-k}= \frac{(b)_n}{(b+n-k)_k}.
    \end{align*}
    Thus, we can rewrite the right-hand side of \eqref{eqn-lem:identity-1} as
    \begin{align}
       \sum_{k=0}^{n} \frac{1}{k!\,(b)_k (n-k)!\,(b)_{n-k}}= \frac{1}{n! (b)_n} \sum_{k=0}^{n} \frac{(-1)^k (-n)_k (b+n-k)_k}{(b)_k }. 
    \end{align}
Our next verification is the identity, $(-1)^k  (b+n-k)_k := (1-b-n)_k$ which is trivially true for $k=0, 1$. Now, using mathematical induction along with basic properties of the Pochhammer symbol $(a+1)_m=(a+1) (a)_{m-1}$ and  $(a+m) (a)_m =(a)_{m+1}$, one can easily verify the identity. 

This gives us a closed form of the right-hand side of \eqref{eqn-lem:identity-1} as
    \begin{align}\label{eqn-closed form1}
       \sum_{k=0}^{n} \frac{1}{k!\,(b)_k (n-k)!\,(b)_{n-k}}= \frac{1}{n! (b)_n} \sum_{k=0}^{n} \frac{ (-n)_k (1-b-n)_k}{k! (b)_k }=\frac{{}_2F_{1}(-n, 1-b-n; b; 1)}{n! (b)_n}. 
    \end{align}
The classical Chu--Vandermonde summation formula provides a closed form for a terminating hypergeometric series \cite{Slater1966}:
\[
{}_2F_{1}\!\left(-n,\,\alpha;\,\beta;\,1\right)
=\sum_{k=0}^{n}\frac{(-n)_k\,(\alpha)_k}{(\beta)_k\,k!}
=\frac{(\beta-\alpha)_n}{(\beta)_n}, \quad n \in \mathbb{Z}_{\ge 0}.
\]
Using this closed form \eqref{eqn-closed form1} can be rewrite as 
 \begin{align}\label{eqn-closed form2}
       \sum_{k=0}^{n} \frac{1}{k!\,(b)_k (n-k)!\,(b)_{n-k}}=\frac{(2b-1+n)_n}{n! (b)_n (b)_n}. 
    \end{align}
Next, recall below two identities of the Pochhammer symbol as
\begin{align}\label{eqn-closed form3}
      (a)_{2n}=(a)_n (a+n)_n ; \quad   (a)_{2n}=2^{2n} \left(\frac{a}{2}\right)_n\left(\frac{a+1}{2}\right)_n. 
    \end{align}
Finally, setting $a=2b-1$, we have 
 \begin{align}\label{eqn-closed form2}
       \sum_{k=0}^{n} \frac{1}{k!\,(b)_k (n-k)!\,(b)_{n-k}}=\frac{(2b-1)_{2n} }{n! (2b-1)_{n} (b)_n }=\frac{2^{2n} \left(b-\frac{1}{2}\right)_n}{n! (2b-1)_{n} (b)_n}.
    \end{align}
\end{proof}

\begin{lemma}\label{lem:identity-2}
For  $b \in \mathbb{C}\setminus \mathbb{Z}_0^{-}$ such that $\real(b) \ge -1/2$, then
\begin{align}\label{eqn-lem:identity-2}
    \sum_{k=0}^{n} \left|\frac{1}{k!\,(b)_k}\right| \left|\frac{1}{(n-k)!\,(b)_{n-k}}\right| \le \frac{2^n}{n! |b|^n}.
\end{align}
 \end{lemma}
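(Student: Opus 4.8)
The plan is to reduce the whole estimate to a single pointwise bound on the modulus of the Pochhammer symbol, namely $|(b)_k| \ge |b|^k$, after which the inequality collapses onto the binomial theorem. The first and essentially only substantive step is to control the individual factors of $(b)_k = \prod_{j=0}^{k-1}(b+j)$. Writing $b = x + iy$ with $x = \real(b) \ge -\tfrac12$, a direct computation gives
\[
|b+j|^2 - |b|^2 = (x+j)^2 - x^2 = j\,(2x + j),
\]
so that for every integer $j \ge 1$ one has $2x + j \ge -1 + j \ge 0$, and therefore $|b+j| \ge |b|$ (with equality at $j = 0$). This is precisely where the hypothesis $\real(b) \ge -\tfrac12$ enters, and the case $j = 1$ shows the threshold cannot be relaxed. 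Taking the product over $j = 0, \dots, k-1$ yields the key estimate $|(b)_k| \ge |b|^k$ for all $k \ge 0$, equivalently $\dfrac{1}{|(b)_k|} \le \dfrac{1}{|b|^k}$.

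Next I would substitute this bound into each of the two factors of the summand. For a fixed index $k$,
\[
\frac{1}{k!\,|(b)_k|}\cdot\frac{1}{(n-k)!\,|(b)_{n-k}|}
\;\le\;
\frac{1}{k!\,|b|^{k}}\cdot\frac{1}{(n-k)!\,|b|^{\,n-k}}
\;=\;
\frac{1}{k!\,(n-k)!\,|b|^{n}},
\]
using $|b|^{k}\,|b|^{\,n-k} = |b|^{n}$. Summing over $0 \le k \le n$ and factoring out $|b|^{-n}$ then reduces the claim to the elementary identity
\[
\sum_{k=0}^{n} \frac{1}{k!\,(n-k)!}
= \frac{1}{n!}\sum_{k=0}^{n}\binom{n}{k}
= \frac{2^{n}}{n!},
\]
which delivers exactly the right-hand side $\dfrac{2^{n}}{n!\,|b|^{n}}$.

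I expect no genuine obstacle: the binomial step is automatic, so all the content sits in the factor estimate $|b+j| \ge |b|$. The only points requiring care are to note that this inequality is an equality (not strict) at $j = 0$, so the product bound $|(b)_k| \ge |b|^k$ is sharp for $k \le 1$, and to confirm that $\real(b) \ge -\tfrac12$ is the exact condition making all the factor inequalities hold simultaneously for $j \ge 1$. It is worth remarking that this direct argument is cleaner than attempting to invoke the exact evaluation of \Cref{lem:identity-1}, which applies only to real $b$ and, even there, would force one to compare the closed form $\tfrac{2^{2n}(b-\frac12)_n}{n!\,(b)_n(2b-1)_n}$ against $\tfrac{2^n}{n!\,b^n}$ rather than obtaining the bound transparently.
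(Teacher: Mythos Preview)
Your proposal is correct and follows essentially the same approach as the paper: establish $|(b)_k|\ge |b|^k$ from the factorwise inequality $|b+j|\ge |b|$ for $j\ge 1$ under $\real(b)\ge -\tfrac12$, then reduce the sum to the binomial identity $\sum_{k=0}^n \tfrac{1}{k!(n-k)!}=\tfrac{2^n}{n!}$. Your derivation of the factor inequality via $|b+j|^2-|b|^2=j(2x+j)$ is in fact a bit more explicit than the paper's version, and your remarks on sharpness and on why \Cref{lem:identity-1} is not the right tool here are accurate.
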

\begin{proof}
   Suppose $b \in \mathbb{C}\setminus \mathbb{Z}_0^{-}$ with $b= \alpha + i \beta$. It is trivially holds that $|(b)_1|=|b|$. Now, for $k \ge 2$, we have
   \begin{align*}
    |(b)_k|^2= |b|^2|b+1|^2 \ldots |b+k-1|^2
    &= (\alpha^2+\beta^2) ((\alpha+1)^2+\beta^2) \ldots  ((\alpha+k-1)^2+\beta^2) \ge  (\alpha^2+\beta^2)^k =(|b|^k)^2  
   \end{align*} 
   provided $\alpha= \real(b) \ge -1/2$. This together, we have $|(b)_k| \ge |b|^k $ for $k \ge 1$. Similarly, it follows that $|(b)_{n-k}| \ge |b|^{n-k} $ for $k= 0, 1, \ldots, n$. 
   
  Now, from the binomial theorem, we know
   \begin{align*}
       \sum_{k=0}^n \frac{n!}{k! (n-k)!} x^k = (1+x)^n \implies \sum_{k=0}^n \frac{1}{k! (n-k)!}  = \frac{2^n}{n!}. 
   \end{align*}
Finally, we conclude that for $\real(b)>-1/2$
\begin{align*}
    \sum_{k=0}^{n} \left|\frac{1}{k!\,(b)_k}\right| \left|\frac{1}{(n-k)!\,(b)_{n-k}}\right| \le  \sum_{k=0}^{n} \frac{1}{k!\,\left|b\right|^k} \frac{1}{(n-k)!\,\left|b\right|^{n-k}}= \frac{1}{|b|^n} \sum_{k=0}^{n} \frac{1}{k!}\, \frac{1}{(n-k)!} = \frac{2^n}{n! |b|^n}.
\end{align*}
This completes the proof.   
\end{proof}

\section{Lemniscate starlike of analytic functions}\label{sec:Main-result}
Now we state and prove our main theorem.

\begin{theorem}\label{thm:anlytic-LS}
    Let $\{a_n\}_{n\ge 1}$ be real or complex sequence such that 
    \begin{align}\label{eqn:hypo-1-thm1}
a_1=1 \quad \text{and} \quad \sum_{n=1}^{\infty} \sum_{k=0}^n |a_{k+1}|  |a_{n-k+1}|<1.
\end{align}
Suppose that $f(z):= z+ \sum_{n=2}^{\infty} a_n z^n$. Now, for $c \in [0, 1]$ if
\begin{align}\label{eqn:hypo-thm1}
\sum_{n=1}^{\infty} \left( \left(\tfrac{21}{4}\right)^{\tfrac{n}{3}}- c\right) \sum_{k=0}^n  |a_{k+1}|  |a_{n-k+1}| < c, 
\end{align}
then $z f'(z)/ f(z)\prec \sqrt{1+ cz}$. 
\end{theorem}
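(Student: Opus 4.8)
The plan is to translate the membership $f\in\mathcal{S}^{\ast}(q_c)$ into a single pointwise inequality. Since $\sqrt{1+cz}$ is univalent with value $1$ at the origin and its square $1+cz$ maps $\mathcal{U}$ univalently onto the disk $\{w:|w-1|<c\}$ (which omits $0$ because $c\le 1$), the subordination $zf'/f\prec\sqrt{1+cz}$ is equivalent to $(zf'/f)^2\prec 1+cz$, that is, to
\[
\left|\Bigl(\frac{zf'(z)}{f(z)}\Bigr)^{2}-1\right|<c\qquad(z\in\mathcal{U}).
\]
I would therefore aim to prove this inequality, rewriting its left-hand side as $\bigl|(zf'(z))^2-(f(z))^2\bigr|/|f(z)|^2$ and estimating numerator and denominator separately.

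First I would expand the numerator as a Cauchy product. With $f(z)=\sum_{m\ge1}a_m z^m$ and $a_1=1$, a short computation (reindexing the $j(N-j)-1$ weights of the product by $j=k+1$, $N=n+2$) gives that the coefficient of $z^{n+2}$ in $(zf'(z))^2-(f(z))^2$ equals
\[
\sum_{k=0}^{n}\bigl(n(k+1)-k^2\bigr)\,a_{k+1}\,a_{n-k+1},
\]
so the weight appearing is exactly $\Psi_n(k)=n(k+1)-k^2$ of \Cref{lemma-basic2}. Passing to absolute values and applying \Cref{lemma-basic2} (which in turn rests on \Cref{lemma-basic1}) to bound $\Psi_n(k)\le(21/4)^{n/3}$ uniformly for $k\in[0,n]$, and then using $|z|<1$, I obtain
\[
\bigl|(zf'(z))^2-(f(z))^2\bigr|\le |z|^2\sum_{n\ge1}\Bigl(\tfrac{21}{4}\Bigr)^{n/3}S_n,\qquad S_n:=\sum_{k=0}^{n}|a_{k+1}||a_{n-k+1}|.
\]

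Next I would factor $f(z)=z\,p(z)$ with $p(z)=1+\sum_{m\ge1}a_{m+1}z^m$, so that $|f(z)|^2=|z|^2|p(z)|^2$ and the factors $|z|^2$ cancel against the numerator. Hypothesis \eqref{eqn:hypo-1-thm1}, i.e.\ $\sum_{n\ge1}S_n<1$, plays two roles here: since $S_n\ge 2|a_{n+1}|$ it forces $\sum_{m\ge1}|a_{m+1}|<\tfrac12$, whence $|p(z)|>\tfrac12$ so that $p$ is zero-free and $zf'/f$ is genuinely analytic in $\mathcal{U}$; and it yields a lower bound for $|p(z)|^2$ in terms of the $S_n$. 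Combining this lower bound with the numerator estimate and clearing denominators would reduce the whole problem to a scalar inequality among the $S_n$ of the type recorded in hypothesis \eqref{eqn:hypo-thm1}.

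I expect the denominator estimate to be the crux and the step demanding the most care. The target inequality is only as strong as the lower bound one proves for $|p(z)|^2$: a crude bound such as $|p|^2\ge 1-\sum_{n\ge1}S_n|z|^n$ is safe but lossy and tends to force a condition stronger than \eqref{eqn:hypo-thm1}, whereas an overly optimistic estimate is unsound. Calibrating this bound so that the constants line up exactly with the factor $\bigl((21/4)^{n/3}-c\bigr)$ in \eqref{eqn:hypo-thm1}, and confirming that $|z|<1$ upgrades the resulting non-strict scalar inequality to the strict pointwise bound, is where the real work lies. The Pochhammer and Gamma-function identities in \Cref{lem:identity-1,lem:identity-2} I would reserve for the later applications to special functions rather than for this core estimate.
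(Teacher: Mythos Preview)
Your outline is precisely the paper's argument: the same reduction to $\bigl|(zf'/f)^2-1\bigr|<c$, the same Cauchy-product expansion producing the weight $\Psi_n(k)=n(k+1)-k^2$, and the same appeal to \Cref{lemma-basic2} for the numerator. On the denominator the paper does \emph{not} look for anything sharper than the bound you label ``crude'': it uses exactly
\[
\left|\frac{(f(z))^2}{z^2}\right|>1-\sum_{n\ge1}S_n,
\]
and then asserts that the resulting quotient is $<c$ ``provided \eqref{eqn:hypo-thm1} holds''.

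Your suspicion that this lower bound forces a hypothesis stronger than \eqref{eqn:hypo-thm1} is correct. Clearing the fraction
\[
\frac{\sum_{n\ge1}(21/4)^{n/3}S_n}{1-\sum_{n\ge1}S_n}<c
\]
yields $\sum_{n\ge1}\bigl((21/4)^{n/3}+c\bigr)S_n<c$, with $+c$ in place of the $-c$ recorded in \eqref{eqn:hypo-thm1}; the two conditions differ by $2c\sum_{n\ge1}S_n$. So there is no hidden sharper denominator estimate for you to discover---the paper uses your ``lossy'' bound verbatim, and its final algebraic step (or, equivalently, the sign in the stated hypothesis) appears to carry a slip. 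Your additional care in checking that $p$ is zero-free so that $zf'/f$ is genuinely analytic on $\mathcal{U}$, and in justifying the passage from the subordination to its squared form, goes beyond what the paper makes explicit.
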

\begin{proof}
    Now, if $f(z)= z+ \sum_{n=2}^{\infty} a_n z^n$, then 
\begin{align}\label{eqn:dervative-cauchy}\notag
  ( z f'(z) )^2 = \left(z+ \sum_{n=2}^{\infty} n a_n z^n\right)^2 
  &= z^2\left( 1+ \sum_{n=1}^{\infty} (n+1) a_{n+1} z^n\right)^2\\
  &= z^2 \left( 1+ \sum_{n=1}^{\infty} \left(\sum_{k=0}^n (k+1) (n-k+1) a_{k+1}  a_{n-k+1}\right)z^n\right).
\end{align}
Similarly,
\begin{align}\label{eqn:cauchy-product}\notag
  ( f(z) )^2 = \left(z+ \sum_{n=2}^{\infty}  a_n z^n\right)^2 
  &= z^2\left( 1+ \sum_{n=1}^{\infty}  a_{n+1} z^n\right)^2\\
  &= z^2 \left( 1+ \sum_{n=1}^{\infty} \left(\sum_{k=0}^n  a_{k+1}  a_{n-k+1}\right)z^n\right).
\end{align}
From \eqref{eqn:dervative-cauchy} and \eqref{eqn:cauchy-product}, it follwos that 
\begin{align}\label{eqn:dervative-cauchy-2}\notag
  ( z f'(z) )^2- ( f(z) )^2 = z^2  \sum_{n=1}^{\infty} \left(\sum_{k=0}^n \left(n(k+1) -k^2\right) a_{k+1}  a_{n-k+1}\right)z^n.
\end{align}
Thus for $|z|<1$ in together with \Cref{lemma-basic2}, we have
\begin{align*}
   \left|\frac{( z f'(z) )^2- ( f(z) )^2} {z^2}\right| < \sum_{n=1}^{\infty} \sum_{k=0}^n \left|n(k+1) -k^2\right| |a_{k+1}|  |a_{n-k+1}|< \sum_{n=1}^{\infty}\bigg[\left(\frac{21}{4}\right)^{\tfrac{n}{3}} \sum_{k=0}^n  |a_{k+1}|  |a_{n-k+1}|\bigg].
\end{align*}
Simpliarly
\begin{align*}
   \left|\frac{( f(z) )^2} {z^2}\right| >1- \sum_{n=1}^{\infty} \sum_{k=0}^n |a_{k+1}|  |a_{n-k+1}|.
\end{align*}
Finally, we have 
\begin{align*}
   \left| \frac{(z f'(z))^2}{(f(z))^2} -1\right| =  \frac{\left|\tfrac{( z f'(z) )^2- ( f(z) )^2} {z^2}\right|}{\left|\tfrac{( f(z) )^2} {z^2}\right|} &< \dfrac{\sum_{n=1}^{\infty} \sum_{k=0}^n \left|n(k+1) -k^2\right| |a_{k+1}|  |a_{n-k+1}|}{1- \sum_{n=1}^{\infty} \sum_{k=0}^n |a_{k+1}|  |a_{n-k+1}|}\\&< \dfrac{\sum_{n=1}^{\infty}\bigg[\left(\frac{21}{4}\right)^{\tfrac{n}{3}} \sum_{k=0}^n  |a_{k+1}|  |a_{n-k+1}|\bigg]}{1- \sum_{n=1}^{\infty} \sum_{k=0}^n |a_{k+1}|  |a_{n-k+1}|}.
\end{align*}
Since the denominator is positive, the follwoing inequality holds
\begin{align*}
    0<\dfrac{\sum_{n=1}^{\infty}\bigg[\left(\frac{21}{4}\right)^{\tfrac{n}{3}} \sum_{k=0}^n  |a_{k+1}|  |a_{n-k+1}|\bigg]}{1- \sum_{n=1}^{\infty} \sum_{k=0}^n |a_{k+1}|  |a_{n-k+1}|}< c, 
\end{align*}
provided \eqref{eqn:hypo-thm1} holds. This completes the proof of the result. 
\end{proof}

\section{Lemniscate starlikeness of Special functions}\label{sec:Appl-special}
Now, we are going to implement \Cref{thm:anlytic-LS} for constructing several analytic functions that are lemniscate starlike. 

\subsection{Example involving exponential function}\label{sec:Appl-exponential}
\begin{example}
    For our first example, consider the function
    \begin{align*}
        \mathtt{F_1}(z): = -\frac{3 (2 - 2 e^z + 2 z + z^2)}{z^2}=\sum _{n=1}^{\infty } \frac{z^n}{(4)_{n-1}}. 
    \end{align*}
    Clearly, $a_n= 1/(4)_{n-1}$ and a numerical calculation yields 
    \begin{align}
        \sum _{n=1}^{\infty } \left(\sum _{k=0}^n \frac{1}{(4)_k (4)_{n-k}}\right)=4 \left(56-45 e+9 e^2\right) = 0.71529<1.    
        \end{align}
Further, denote that 
\begin{align*}
    \sum _{n=1}^{\infty } \left(\left(\frac{21}{4}\right)^{n/3}-c\right) \sum _{k=0}^n \frac{1}{(4)_k (4)_{n-k}}=\Phi(c).
\end{align*}
The corresponding sum and convergence of the series give 
\[\Phi(c)=\frac{1}{49} \left(-196 (3 e-8) (3 e-7) c-16 e^{\frac{\sqrt[3]{21}}{2^{2/3}}} \left(8+4 \sqrt[3]{42}+42^{2/3}\right)+64 e^{\sqrt[3]{42}}+106 \sqrt[3]{42}+32\ 42^{2/3}+351\right). \]
A routine calculation implies $\Phi(c)- c$ is decreasing function of $c$ and finally, we conclude that 
 $z\mathtt{F_1}'(z)/\mathtt{F_1}(z) \prec \sqrt{1+cz}$ for $c \in [c_0, 1) $, where $c_0 \approx 0.990879$ is the solution of 
\[\frac{1}{49} \left(-196 (3 e-8) (3 e-7) c-16 e^{\frac{\sqrt[3]{21}}{2^{2/3}}} \left(8+4 \sqrt[3]{42}+42^{2/3}\right)+64 e^{\sqrt[3]{42}}+106 \sqrt[3]{42}+32\ 42^{2/3}+351\right)=c\] in $(0, 1]$. 
\end{example}

\subsection{Example involving confluent hypergeometric functions}\label{sec:Appl-conf-hyp}

A particularly important case is obtained by setting $p=0$ and $q=1$, leading to the \textit{confluent hypergeometric limit function}
\begin{equation}
{}_0F_1(;b;z) \;=\; \sum_{n=0}^{\infty} \frac{1}{(b)_n} \frac{z^n}{n!}.
\end{equation}
This function is sometimes denoted by $\Phi(b;z)$ in the classical literature.

The function ${}_0F_1(;b;z)$ is the unique analytic solution normalized by ${}_0F_1(;b;0)=1$ of the second-order linear differential equation
\begin{equation}
z\,y''(z) + b\,y'(z) - y(z) = 0.
\end{equation}

Since the parameters satisfy $p=0 < q+1=2$, the defining series converges for all $z \in \mathbb{C}$, and hence ${}_0F_1(;b;z)$ is an \textit{entire function}.

One of the most remarkable properties of ${}_0F_1$ is its close relation to Bessel functions. For $\nu \in \mathbb{C}$ we have
\begin{equation}
{}_0F_1(; \nu+1; z) = \Gamma(\nu+1)\, z^{-\nu/2}\, I_\nu(2\sqrt{z}),
\end{equation}
where $I_\nu(z)$ denotes the modified Bessel function of the first kind. Similarly,
\begin{equation}
{}_0F_1(; \nu+1; -z) = \Gamma(\nu+1)\, z^{-\nu/2}\, J_\nu(2\sqrt{z}),
\end{equation}
with $J_\nu(z)$ being the Bessel function of the first kind. 
Thus, ${}_0F_1$ provides a hypergeometric framework for Bessel functions and, more generally, for many special functions arising in analysis.

Some special evaluations of ${}_0F_1$ include:
\begin{align*}
{}_0F_1(;1;z) = \sum_{n=0}^\infty \frac{z^n}{(n!)^2} = I_0(2\sqrt{z}), \quad
{}_0F_1\!\left(; \tfrac{1}{2}; -\tfrac{x^2}{4}\right) = \cos(x), \quad
{}_0F_1\!\left(; \tfrac{3}{2}; -\tfrac{x^2}{4}\right) = \frac{\sin(x)}{x}.
\end{align*}
This illustrates how ${}_0F_1$ interpolates between trigonometric and Bessel functions.

Now consider our next example as $\mathtt{F_2}(z):=z {}_0F_1( \quad; b; z)$. Then, the corresponding sequence $a_n= 1/ ((n-1)! (b)_{n-1})$, and   from Lemma \ref{lemma-basic2} we have 
for $b>0$
\begin{align}\label{eqn-example2-1}
       \delta(b):= \sum _{n=1}^{\infty } \left(\sum _{k=0}^n \frac{1}{(b)_k k! (b)_{n-k} (n-k)!}\right) =  {}_1F_2\left(b-\frac{1}{2};b,2 b-1;4\right)-1
\end{align}
Using Mathematica software, we can numerically compute the root of the equation 
\begin{align}
{}_1F_2\left(b-\frac{1}{2};b,2 b-1;4\right)-1=1  
\end{align}
is $b=2.75885$. Further, from the \Cref{fig:example-2} it is clear that $\delta(b)<1$. 
\begin{figure}[h]
    \centering
    \includegraphics[width=0.5\linewidth]{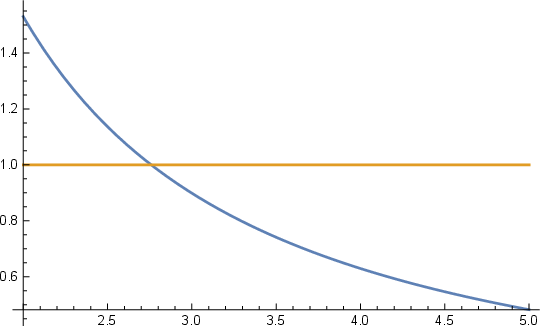}
    \caption{Mapping of $\delta(b)$ for $b>0$.}
    \label{fig:example-2}
\end{figure}

Hence $\delta(b)<1$ for $b>2.75885$. This leads to the following result:
\begin{theorem}\label{thm:example2}
For $b>2.75885$, $c \in (0, 1]$, if 
\begin{align}
  \, _1F_2\left(b-\frac{1}{2};b,2 b-1;2 \sqrt[3]{42}\right)  - c \, _1F_2\left(b-\frac{1}{2};b,2 b-1;4\right) <1,
\end{align}
then 
$z \mathtt{F_2}'(z)/\mathtt{F_2}(z) \prec \sqrt{1+c z}. $

In particular, $\mathtt{F_2}(z):=z {}_0F_1( \quad; b; z)$ is lemmniscate starlike for $b> 3.11423$. 
\end{theorem}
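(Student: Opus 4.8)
The plan is to apply \Cref{thm:anlytic-LS} directly to the sequence $a_n = 1/((n-1)!\,(b)_{n-1})$ arising from $\mathtt{F_2}(z) = z\,{}_0F_1(;b;z)$. The theorem requires verifying two hypotheses: the normalization and summability condition \eqref{eqn:hypo-1-thm1}, and the main coefficient inequality \eqref{eqn:hypo-thm1}. Since $b$ is taken real and positive in this statement, the coefficients are positive reals, so the absolute values may be dropped throughout, which simplifies the bookkeeping considerably.

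First I would recast the relevant double sums in closed form. For each fixed $n \ge 1$, the inner sum $\sum_{k=0}^{n} \frac{1}{k!\,(b)_k\,(n-k)!\,(b)_{n-k}}$ is exactly the left-hand side of \Cref{lem:identity-1}, so it equals $\frac{2^{2n}(b-\tfrac12)_n}{n!\,(b)_n\,(2b-1)_n}$. Summing this over $n$ against the constant weight yields, by the definition of the generalized hypergeometric function, the identity $\delta(b) = {}_1F_2(b-\tfrac12; b, 2b-1; 4) - 1$ recorded in \eqref{eqn-example2-1}; the argument $4$ comes from $2^{2} = 4$ appearing with each power $n$. Hypothesis \eqref{eqn:hypo-1-thm1} then reads $\delta(b) < 1$, which holds precisely when $b > 2.75885$ as established from \Cref{fig:example-2} and the root computation preceding the theorem. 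Next I would treat the weighted sum in \eqref{eqn:hypo-thm1}: splitting the factor $(\tfrac{21}{4})^{n/3} - c$ and using $(\tfrac{21}{4})^{n/3} = (21/4)^{n/3}$, the weight $(21/4)^{1/3}$ raised to the $n$-th power combines with the $2^{2n}$ to produce the hypergeometric argument $4\cdot(21/4)^{1/3} = (4^3 \cdot 21/4)^{1/3} = (16\cdot 21)^{1/3} = (336)^{1/3} = 2\sqrt[3]{42}$, giving the term $\sum_n (\tfrac{21}{4})^{n/3}\sum_k a_{k+1}a_{n-k+1} = {}_1F_2(b-\tfrac12; b, 2b-1; 2\sqrt[3]{42}) - 1$. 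The $-c$ contribution produces $-c\,\delta(b) = -c\big({}_1F_2(b-\tfrac12; b, 2b-1; 4) - 1\big)$. Condition \eqref{eqn:hypo-thm1} is that this total is less than $c$, and after moving the two $-1$ terms across, this is exactly the displayed inequality ${}_1F_2(b-\tfrac12; b, 2b-1; 2\sqrt[3]{42}) - c\,{}_1F_2(b-\tfrac12; b, 2b-1; 4) < 1$. This establishes the first assertion of the theorem.

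For the lemniscate-starlikeness claim (the classical case $c=1$), I would specialize the displayed inequality to $c=1$, reducing it to ${}_1F_2(b-\tfrac12; b, 2b-1; 2\sqrt[3]{42}) - {}_1F_2(b-\tfrac12; b, 2b-1; 4) < 1$. The task is then to identify the threshold value of $b$ above which this holds. I would define $\Lambda(b) := {}_1F_2(b-\tfrac12; b, 2b-1; 2\sqrt[3]{42}) - {}_1F_2(b-\tfrac12; b, 2b-1; 4)$, observe that each ${}_1F_2$ is a decreasing function of $b$ for $b$ large (since increasing $b$ increases the denominator Pochhammer factors $(b)_n(2b-1)_n$ faster than the numerator $(b-\tfrac12)_n$), and hence argue that $\Lambda(b)$ is monotonically decreasing to $0$; the stated threshold $b = 3.11423$ is then the numerical root of $\Lambda(b) = 1$, which I would locate via Mathematica exactly as was done for $2.75885$.

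The main obstacle is the rigorous justification of the term-by-term interchange and closed-form summation that converts the double series into the single ${}_1F_2$ functions evaluated at $4$ and $2\sqrt[3]{42}$. Because all terms are positive, absolute convergence is automatic once one knows the ${}_1F_2$ series converge (which they do, as ${}_1F_2$ is entire, having $p=1 \le q+1 = 3$), so Tonelli's theorem legitimizes rearranging the sum over $n$ and $k$ into a single hypergeometric evaluation; this is the step where the combinatorial identity of \Cref{lem:identity-1} does the essential work. The remaining effort is purely numerical — pinning down the two thresholds $2.75885$ and $3.11423$ — and carries no conceptual difficulty beyond confirming the claimed monotonicity of the relevant ${}_1F_2$ expressions in $b$.
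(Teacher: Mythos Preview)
Your proposal is correct and follows essentially the same route as the paper: apply \Cref{thm:anlytic-LS} to $a_n = 1/((n-1)!\,(b)_{n-1})$, use \Cref{lem:identity-1} to collapse the inner sum, and read off the two ${}_1F_2$ series with arguments $4$ and $2\sqrt[3]{42}$, recovering exactly the identity the paper records immediately after the theorem. Your added remarks on absolute convergence and monotonicity in $b$ are not in the paper's terse justification but are welcome clarifications rather than deviations.
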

The proof the \Cref{thm:example2} can be done easily from \Cref{thm:anlytic-LS} due to the fact that 
\begin{align*}
    &\sum _{n=1}^{\infty } \left(\left(\tfrac{21}{4}\right)^{n/3}-c\right) \sum _{k=0}^n \frac{1}{\left(k! (b)_k\right) \left((n-k)! (b)_{n-k}\right)}\\ \quad &= \, _1F_2\left(b-\tfrac{1}{2};b,2 b-1;2 \sqrt[3]{42}\right)  - c \, _1F_2\left(b-\tfrac{1}{2};b,2 b-1;4\right)+c-1.
\end{align*}

 \subsection{Example involving Bessel functions}\label{sec:Appl-Bessel}
We recall some information about Bessel functions and their connections with geometric function theory. 
Bessel functions of the first kind, denoted by \( J_\nu(z) \), are solutions to Bessel’s differential equation:
\[
z^2 w''(z) + z w'(z) + (z^2 - \nu^2) w(z) = 0,
 \]
 where \(\nu\) is a real or complex parameter. These functions are central to many problems in mathematical physics, including heat conduction, wave propagation, and problems in circular and cylindrical domains. In the unit disc \(\mathbb{D} = \{ z \in \mathbb{C} : |z| < 1 \}\), \( J_\nu(z) \) is analytic because it has the power series expansion:
 \[
 J_\nu(z) = \sum_{n=0}^\infty \frac{(-1)^n}{n! \Gamma(n + \nu + 1)} \left( \frac{z}{2} \right)^{2n + \nu},
 \]
 which converges absolutely in the entire complex plane.

 For applications in geometric function theory, it is often convenient to consider the \emph{normalized form} of the Bessel function:
 \[
 f_3(\nu, z) := 2^\nu \Gamma(\nu + 1) z^{1 - \nu/2} J_\nu(\sqrt{z}),
 \]
 which satisfies the normalization conditions \( f_3(\nu, 0) = 0 \) and \( f_3'(\nu, 0) = 1 \). 
Similarly, the modified Bessel function of the first kind, \( I_\nu(z) \), is defined by:
\[
I_\nu(z) = \sum_{n=0}^\infty \frac{1}{n! \Gamma(n + \nu + 1)} \left( \frac{z}{2} \right)^{2n + \nu},
\]
and has a normalized form analogous to that of \( J_\nu(z) \) as 
\[
 f_4(\nu, z) = 2^\nu \Gamma(\nu + 1) z^{1 - \nu/2} I_\nu(\sqrt{z}).
\]

 In the study of the geometric properties of the function\(  f_3(\nu, z)\), several contributions have been made in the literature \cite{Selinger,Baricz-Ponnusamy-2010,Szasz-Kupan}. Selinger \cite{Selinger} demonstrated that the function \( f_{\nu}(z) \) is starlike in the unit disk \( \mathbb{D} \) for all \( \nu \geq 0.75 \). This range was subsequently improved by Baricz and Ponnusamy \cite{Baricz-Ponnusamy-2010}, who showed that starlikeness holds for \( \nu \geq -0.432619635\ldots \) or within the interval \( -0.875 \leq \nu < -0.719669914 \). It was also noted that the constant \( -0.875 \) is optimal in the sense that it is the smallest value of \( \nu \) for which \( f_{\nu} \) remains starlike in \( D \). A variety of techniques were utilized in \cite{Baricz-Ponnusamy-2010} to verify the starlikeness of \(  f_3(\nu, z) \) for \( \nu \geq -0.875 \), although a gap remains where the result has not been fully validated. By synthesizing all the results in \cite{Baricz-Ponnusamy-2010} concerning both \(  f_3(\nu, z) \) and \(  f_4(\nu, z) \), it can be concluded that 

\begin{theorem}\cite{Baricz-Ponnusamy-2010}\label{thmA}
For \( \nu \in [-0.875, -0.719669914) \cup (-0.4373468657, \infty) \) the functions \(  f_3(\nu, z) \) and \(  f_4(\nu, z) \) are starlike in $\mathbb{D}$. 
\end{theorem}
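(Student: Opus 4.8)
The plan is to recover Theorem A from the classical zero structure of $J_\nu$ and $I_\nu$ rather than from the coefficient machinery developed earlier. Because every $\nu$ in the stated set exceeds $-1$, all zeros of $J_\nu$ are real and positive; write them as $0<j_{\nu,1}<j_{\nu,2}<\cdots$. Starting from the Hadamard product
\[
J_\nu(w)=\frac{(w/2)^\nu}{\Gamma(\nu+1)}\prod_{n=1}^{\infty}\left(1-\frac{w^2}{j_{\nu,n}^2}\right),
\]
I would substitute $w=\sqrt{z}$ and cancel the normalising constant to obtain the product forms
\[
f_3(\nu,z)=z\prod_{n=1}^{\infty}\left(1-\frac{z}{j_{\nu,n}^2}\right),\qquad f_4(\nu,z)=z\prod_{n=1}^{\infty}\left(1+\frac{z}{j_{\nu,n}^2}\right),
\]
the second one because the zeros of $I_\nu$ are $\{\pm i\,j_{\nu,n}\}$. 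A first necessary requirement is then $j_{\nu,1}^2>1$, so that $f_3$ and $f_4$ are free of zeros on $\mathbb{D}\setminus\{0\}$.

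Next I would form the logarithmic derivatives
\[
\frac{z f_3'(\nu,z)}{f_3(\nu,z)}=1-\sum_{n=1}^{\infty}\frac{z}{j_{\nu,n}^2-z},\qquad \frac{z f_4'(\nu,z)}{f_4(\nu,z)}=1+\sum_{n=1}^{\infty}\frac{z}{j_{\nu,n}^2+z}.
\]
The decisive simplification is the elementary estimate that, for real $a>\rho$ and $|z|=\rho<1$, one has $\real\{z/(a-z)\}\le \rho/(a-\rho)$ and $\real\{z/(a+z)\}\ge -\rho/(a-\rho)$, with equality at $z=\rho$ and $z=-\rho$ respectively. Applied term by term (valid since every $j_{\nu,n}^2$ is real and larger than $\rho$), this shows that on each circle $|z|=\rho$ the real part of $zf'/f$ is minimised on the real axis, and that for \emph{both} functions the minimum equals the same scalar quantity $g(\rho):=1-\sum_{n\ge1}\rho/(j_{\nu,n}^2-\rho)$. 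Since $g$ is continuous and strictly decreasing on $(0,1)$, starlikeness of $f_3$ and of $f_4$ in $\mathbb{D}$ is governed by the single inequality $\sum_{n\ge1}(j_{\nu,n}^2-1)^{-1}\le1$, which after a Bessel recurrence is exactly $J_{\nu+1}(1)\le 2J_\nu(1)$, i.e.\ $f_3'(\nu,1)\ge0$.

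It then remains to decide, as a function of $\nu$, when this inequality holds. Here I would expand $\sum_{n\ge1}(j_{\nu,n}^2-1)^{-1}=\sum_{m\ge1}\sigma_m(\nu)$ in terms of the Euler--Rayleigh sums $\sigma_m(\nu)=\sum_n j_{\nu,n}^{-2m}$, for which closed rational-in-$\nu$ expressions are known, for instance $\sigma_1=\tfrac{1}{4(\nu+1)}$ and $\sigma_2=\tfrac{1}{16(\nu+1)^2(\nu+2)}$. Truncating this series and estimating the tail geometrically converts the condition into explicit inequalities in $\nu$, whose boundary cases are the transcendental equations $J_{\nu+1}(1)=2J_\nu(1)$ and $j_{\nu,1}=1$; solving these numerically produces the endpoints $-0.4373468657$, $-0.719669914$ and $-0.875$.

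The main obstacle is the negative band $\nu\in(-1,0)$. There $j_{\nu,1}$ decreases toward $0$ (in fact $j_{\nu,1}\approx 2\sqrt{\nu+1}$ near $\nu=-1$), so the leading Rayleigh sum $\sigma_1=1/(4(\nu+1))$ blows up and the geometric tail bound degrades; moreover the maps $\nu\mapsto j_{\nu,n}$ and $\nu\mapsto\sum_n (j_{\nu,n}^2-1)^{-1}$ cease to be monotone, so a single monotone sweep in $\nu$ cannot settle the sign of $g(1)$. This is precisely why the admissible set splits into two pieces, and why the clean estimates fail to close on the sub-interval $[-0.719669914,-0.4373468657]$, producing the gap noted after the statement. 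Controlling the sum rigorously on this delicate band, rather than on the comfortable large-$\nu$ side, is the crux of the argument, and it is exactly here that \cite{Baricz-Ponnusamy-2010} must stitch together several distinct techniques.
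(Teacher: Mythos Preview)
The paper contains no proof of this statement: the theorem is quoted from \cite{Baricz-Ponnusamy-2010} as background in the Bessel-function subsection, with the citation placed in the theorem heading itself, and the surrounding text merely summarises how that reference arrives at the stated union of intervals. There is therefore no proof in the present paper to compare your outline against.

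For what it is worth, the strategy you sketch --- Hadamard factorisation of $J_\nu$ and $I_\nu$, reduction of starlikeness of both $f_3$ and $f_4$ to the single scalar inequality $\sum_{n\ge1}(j_{\nu,n}^{2}-1)^{-1}\le1$ via the minimum-on-the-real-axis principle, and analysis of that inequality through Euler--Rayleigh sums --- is indeed the methodology of the original reference rather than of this paper. Your candid acknowledgement that the estimates do not close on the sub-interval $[-0.719\ldots,-0.437\ldots]$ is also accurate: that is precisely the ``gap'' the present paper alludes to in the sentence immediately preceding the theorem.
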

Our next consideration is the generalized Bessel function, introduced in \cite{Baricz1,Baricz3}.  The generalized and normalized Bessel functions have the series form
	\begin{align*}
		\mathtt{U}_{\nu}(z) := \sum_{n=0}^\infty \frac{(-1)^n \eta^n}{4^n (\kappa)_n} \; \frac{z^n}{n!}, \quad  2 \kappa = 2 p+ b+1 \neq 0, -2, -4, -6, \ldots;
	\end{align*}
	which is the solution of
	\begin{align}\label{Bessel-gen-DE}
		4 z^2 \mathtt{U''}(z)+ 4 \kappa z \mathtt{U'}(z)+ \eta z \mathtt{U}(z)=0.
	\end{align}
	{For} $b=c=1$, the~function $\mathtt{U}_{\nu}$ represents the normalized Bessel function of order $p$, while for  $b=- \eta =1$, the~function $\mathtt{U}_{\nu}$ represents the normalized modified Bessel function of order $p$. The~Spherical Bessel function can also be obtain by using $b=2$, $\eta=1$.
	
	The inclusion of $\mathtt{U}_{\nu}$ in various subclasses of univalent functions theory is extensively studied  by many authors~\cite{Kanas-saiful,Madan-Kumar-Ravi-Hungar,saiful-Dhuian,saiful-swami,Baricz3,Baricz-Pswamy,Baricz-Szasz,Baricz1,Baricz2} and some references therein. Recently, the~lemniscate starlike, convexity and other properties of $\mathtt{U}_{\nu}$ is studied in~\cite{Madan-Kumar-Ravi-Hungar,Lemniscate,Zayed-Lem-Bessel}.

   The function $\mathtt{U}_{\nu}$ specializes to the normalized Bessel function of order $p$ when $b=\eta=1$, and to the normalized modified Bessel function of order $p$ when $b=-\eta=1$. By selecting $b=2$ and $c=1$, one also obtains the spherical Bessel function.    

	Let us consider
	\begin{align*}
		\mathtt{F_3}(\kappa, \eta, z):= z \mathtt{U}_p(z).
	\end{align*}
	It can be shown that 
	\begin{align}
		4 z f_4''(z)+ 4(\kappa-2) z f_4'(z)+ (cz- 4 \kappa+8)f_4(z)=0.
	\end{align}

Now we have the following result regarding the lemniscate starlike property of generalised Bessel functions: 
\begin{theorem}\label{thm-GBessel}
    For real $\kappa>0$ and $\eta \in \mathbb{C}$, the function $\mathtt{F_3}$ is lemniscate starlike if for $c \in (0, 1]$, we have
    \begin{align}\label{en:hypo-1-thm-Bessel}
       \, _1F_2\left(\kappa -\frac{1}{2};\kappa ,2 \kappa -1;\frac{|\eta|\sqrt[3]{21}}{2^{2/3}}\right)-1<c\, _1F_2\left(\kappa -\frac{1}{2};\kappa ,2 \kappa -1; |\eta|\right) \le 2 c.
    \end{align} 
    Then, the function $\mathtt{F_3}(\kappa, \eta, z)$ is lemniscate starlike. 
\end{theorem}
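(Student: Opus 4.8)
The plan is to show that the normalized generalized Bessel function $\mathtt{F_3}(\kappa,\eta,z)=z\,\mathtt{U}_p(z)$ satisfies the hypotheses of \Cref{thm:anlytic-LS} and then to invoke that theorem directly. Writing $\mathtt{F_3}(\kappa,\eta,z)=\sum_{m=1}^{\infty}a_m z^m$ and reindexing the defining series by $m=n+1$ gives $a_{n+1}=\tfrac{(-1)^n\eta^n}{4^n(\kappa)_n\,n!}$, so $a_1=1$ as required. Since $\kappa>0$ forces every factor $(\kappa)_k$ to be positive, I would record the clean expression $|a_{k+1}|=\tfrac{|\eta|^k}{4^k(\kappa)_k\,k!}$, which is what makes the absolute values factor neatly through the Pochhammer symbols.

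The first real step is to evaluate the inner Cauchy-product sum. Multiplying the two coefficients and pulling out $|\eta|^n/4^n$ gives
\[
\sum_{k=0}^{n}|a_{k+1}|\,|a_{n-k+1}|=\frac{|\eta|^n}{4^n}\sum_{k=0}^{n}\frac{1}{k!\,(\kappa)_k\,(n-k)!\,(\kappa)_{n-k}}.
\]
Here I would apply \Cref{lem:identity-1} with $b=\kappa$ (legitimate, since $\kappa\in\mathbb{R}\setminus\mathbb{Z}_0^-$), and after the $4^n$ cancels the $2^{2n}$ produced by the identity, obtain
\[
\sum_{k=0}^{n}|a_{k+1}|\,|a_{n-k+1}|=\frac{|\eta|^n\,(\kappa-\tfrac{1}{2})_n}{n!\,(\kappa)_n\,(2\kappa-1)_n}.
\]
Summing over $n\ge1$ then recognizes the total as a shifted value of ${}_1F_2$, namely $\sum_{n\ge1}\sum_{k=0}^{n}|a_{k+1}|\,|a_{n-k+1}|={}_1F_2(\kappa-\tfrac{1}{2};\kappa,2\kappa-1;|\eta|)-1$, so that the summability requirement \eqref{eqn:hypo-1-thm1} becomes exactly ${}_1F_2(\kappa-\tfrac{1}{2};\kappa,2\kappa-1;|\eta|)<2$, the content of the right-hand bound in \eqref{en:hypo-1-thm-Bessel}.

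For the coefficient condition \eqref{eqn:hypo-thm1}, the key algebraic observation is the radical identity $\tfrac{\sqrt[3]{21}}{2^{2/3}}=\bigl(\tfrac{21}{4}\bigr)^{1/3}$, which lets me absorb the geometric weight into the argument via $\bigl(\tfrac{21}{4}\bigr)^{n/3}|\eta|^n=\bigl(\tfrac{|\eta|\sqrt[3]{21}}{2^{2/3}}\bigr)^n$. Substituting this and splitting \eqref{eqn:hypo-thm1} into its two pieces converts it, after the constant terms cancel, into
\[
{}_1F_2\!\left(\kappa-\tfrac{1}{2};\kappa,2\kappa-1;\tfrac{|\eta|\sqrt[3]{21}}{2^{2/3}}\right)-1<c\,{}_1F_2\!\left(\kappa-\tfrac{1}{2};\kappa,2\kappa-1;|\eta|\right),
\]
which is precisely the left-hand inequality of \eqref{en:hypo-1-thm-Bessel}. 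With both hypotheses of \Cref{thm:anlytic-LS} verified, an appeal to that theorem yields $z\mathtt{F_3}'(z)/\mathtt{F_3}(z)\prec\sqrt{1+cz}$, completing the argument.

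I expect the main obstacle to be purely organizational rather than analytic: correctly matching the Pochhammer indices when invoking \Cref{lem:identity-1}, carefully tracking the $4^n$ against the $2^{2n}$ it cancels, and confirming the substitution $\tfrac{\sqrt[3]{21}}{2^{2/3}}=(21/4)^{1/3}$ so that the inflated argument in the statement lines up with the series. The hypothesis $\kappa>0$ does the essential analytic work by guaranteeing positivity of the Pochhammer symbols, so no separate convergence or positivity estimate is needed beyond what \Cref{lem:identity-1} and \Cref{thm:anlytic-LS} already supply.
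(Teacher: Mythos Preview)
Your proposal is correct and follows essentially the same approach as the paper: identify the coefficients $|a_{k+1}|=|\eta|^k/(4^k k!\,(\kappa)_k)$ using $\kappa>0$, evaluate the inner Cauchy sum via \Cref{lem:identity-1}, recognize the resulting series as ${}_1F_2(\kappa-\tfrac12;\kappa,2\kappa-1;\cdot)-1$, and then read off the two hypotheses of \Cref{thm:anlytic-LS} as the two inequalities in \eqref{en:hypo-1-thm-Bessel}. The only cosmetic difference is that the paper writes out the weighted sum for \eqref{eqn:hypo-thm1} explicitly before simplifying, whereas you invoke the radical identity $(21/4)^{1/3}=\sqrt[3]{21}/2^{2/3}$ directly; the content is identical.
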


\begin{proof}
   If we denote 
\[
   a_k= \frac{(-\eta)^{k-1}}{4^{\,k-1}(k-1)!\, (\kappa)_{k-1}}, 
\]
then 
\begin{align*}
		\mathtt{F_3}(\kappa, \eta, z):= z \mathtt{U}_p(z)= z+\sum_{k=2}^{\infty} a_k z^{k}.
\end{align*}
In order to prove the result by applying \Cref{thm:anlytic-LS}, it suffices to verify the hypotheses given in \eqref{eqn:hypo-1-thm1} and \eqref{eqn:hypo-thm1}, namely 
\begin{align}\label{eqn:hypo-1-thm1-appl_B}
\sum_{n=1}^{\infty}  \sum_{k=0}^n  \frac{|\eta|^{k}}{4^k k! |(\kappa)_{k}|}  \frac{|\eta|^{\,n-k}}{4^{\,n-k} (n-k)! |(\kappa)_{n-k}|} < 1,
\end{align}
and 
\begin{align}\label{eqn:hypo-thm1-appl_B}
\sum_{n=1}^{\infty} \left( \left(\tfrac{21}{4}\right)^{\tfrac{n}{3}}- c\right) \sum_{k=0}^n  \frac{|\eta|^{k}}{4^k k! |(\kappa)_{k}|}  \frac{|\eta|^{\,n-k}}{4^{\,n-k} (n-k)! |(\kappa)_{n-k}|} < c.
\end{align}

\noindent
Consider first the sum on the left-hand side of \eqref{eqn:hypo-1-thm1-appl_B}.  
Since $\kappa>0$, it follows that $|(\kappa)_{k}|=(\kappa)_{k}$.  
Applying \Cref{lem:identity-1}, we obtain
\begin{align*}
\sum_{n=1}^{\infty} \frac{|\eta|^{n}}{4^n} \sum_{k=0}^n  
 \frac{1}{ k! (\kappa)_{k}\, (n-k)! (\kappa)_{n-k}} 
&=\sum_{n=1}^{\infty} \frac{|\eta|^{n}}{4^n}  
   \frac{ 2^{2n} \left(\kappa-\tfrac{1}{2}\right)_{n}}{\left(2\kappa-1\right)_{n} \left(\kappa\right)_{n} n!}\\
&=  {}_1F_{2}\!\left( \kappa-\tfrac{1}{2};\, 2\kappa-1,\, \kappa;\,  |\eta|\right)-1.
\end{align*}
Therefore, the inequality \eqref{eqn:hypo-1-thm1-appl_B} holds provided that 
\[
 {}_1F_{2}\!\left( \kappa-\tfrac{1}{2};\, 2\kappa-1,\, \kappa;\,  |\eta|\right) < 2.
\]

\noindent
Next, consider the sum in \eqref{eqn:hypo-thm1-appl_B}.  
Proceeding as before, we have
\begin{align*}
&\sum_{n=1}^{\infty} \left( \left(\tfrac{21}{4}\right)^{\tfrac{n}{3}}- c\right) 
  \frac{|\eta|^{n}}{4^n} \sum_{k=0}^n  
   \frac{1}{4^k k! (\kappa)_{k}\, (n-k)! (\kappa)_{n-k}} \\
&=\sum_{n=1}^{\infty} \left( \left(\tfrac{21}{4}\right)^{\tfrac{n}{3}}- c\right) 
   \frac{|\eta|^{n}}{4^n}  
   \frac{ 2^{2n} \left(\kappa-\tfrac{1}{2}\right)_{n}}
   {\left(2\kappa-1\right)_{n} \left(\kappa\right)_{n} n!}\\
&= {}_1F_{2}\!\left( \kappa-\tfrac{1}{2};\, 2\kappa-1,\, \kappa;\, 
   \left(\tfrac{21}{4}\right)^{\tfrac{1}{3}} |\eta| \right)- 1
   - c\; {}_1F_{2}\!\left( \kappa-\tfrac{1}{2};\, 2\kappa-1,\, \kappa;\,  |\eta|\right)+c.
\end{align*}
Hence, the inequality \eqref{eqn:hypo-thm1-appl_B} holds if
\begin{align*}
 {}_1F_{2}\!\left( \kappa-\tfrac{1}{2};\, 2\kappa-1,\, \kappa;\, 
   \left(\tfrac{21}{4}\right)^{\tfrac{1}{3}} |\eta| \right)- 1
 < c\; {}_1F_{2}\!\left( \kappa-\tfrac{1}{2};\, 2\kappa-1,\, \kappa;\,  |\eta|\right).
\end{align*}
\noindent
Thus, both hypotheses of \Cref{thm:anlytic-LS} are satisfied for $c \in (0, 1]$ whenever condition \eqref{en:hypo-1-thm-Bessel} holds. Consequently, the desired conclusion follows.
\end{proof}
By taking $\eta=\pm 1$ and $c=1$, numerically and graphically (See \Cref{fig:thm-GBessel}), it can be shown that the hypothesis of \Cref{thm-GBessel} holds for $\kappa >0.694651$. 

\begin{figure}[h]
    \centering
    \includegraphics[width=0.6\linewidth]{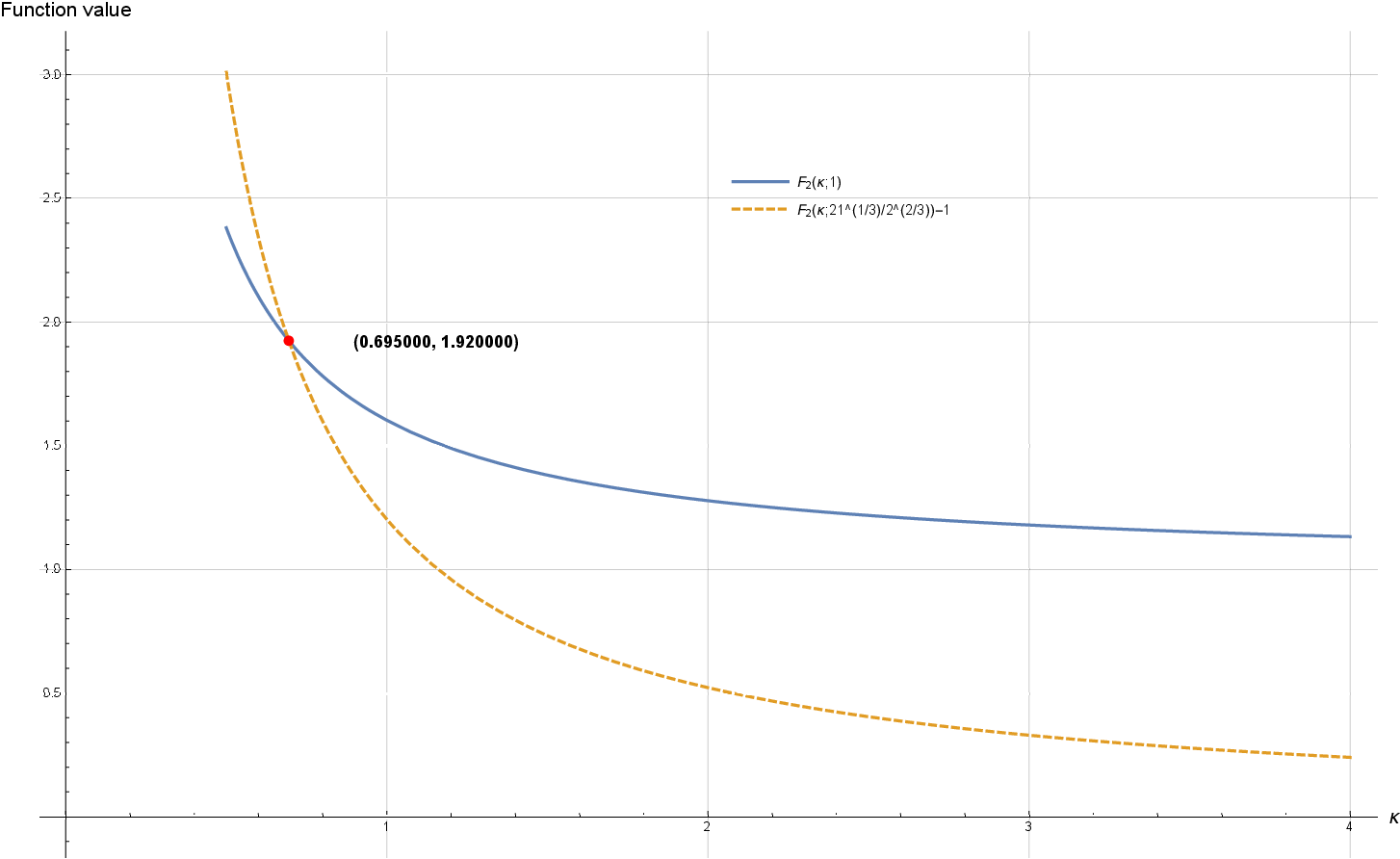}
    \caption{Special case of \Cref{thm-GBessel}}
    \label{fig:thm-GBessel}
\end{figure}

Since for $\eta= \pm 1$ and $\kappa= \nu+1$, the functions $\mathtt{F_3}(\kappa, \eta, z):= z \mathtt{U}_{\nu}(z)$ is reduces to the normalized form of classical Bessel $f_3(\nu, z)$ and modified Bessel $f_4(\nu, z)$ functions respectively, by taking $c=1$, we have the following special result. 
\begin{corollary}
    The normalized Bessel $f_3(\nu, z)$ and modified Bessel $f_4(\nu, z)$ functions of order $\nu$ are lemniscate starlike for $\nu > -0.305349$. 
\end{corollary}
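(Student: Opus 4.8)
The plan is to obtain the corollary as an immediate specialization of \Cref{thm-GBessel}. Recall that lemniscate starlikeness (subordination to $\sqrt{1+z}$) is exactly the case $c=1$, and that the normalized classical Bessel function $f_3(\nu,z)$ and modified Bessel function $f_4(\nu,z)$ arise from $\mathtt{F_3}(\kappa,\eta,z)=z\,\mathtt{U}_\nu(z)$ by setting $\eta=1$ and $\eta=-1$ respectively, together with $\kappa=\nu+1$. Since only $|\eta|$ enters the hypothesis \eqref{en:hypo-1-thm-Bessel}, the two choices $\eta=\pm 1$ yield identical conditions, so it suffices to treat $|\eta|=1$ once and apply the conclusion to both $f_3$ and $f_4$ simultaneously.

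Substituting $c=1$ and $|\eta|=1$ into \eqref{en:hypo-1-thm-Bessel}, and using $\frac{\sqrt[3]{21}}{2^{2/3}}=(21/4)^{1/3}$, the hypothesis collapses to the pair of conditions
\[
\, _1F_2\!\left(\kappa-\tfrac12;\kappa,2\kappa-1;(21/4)^{1/3}\right)-\, _1F_2\!\left(\kappa-\tfrac12;\kappa,2\kappa-1;1\right)<1,
\qquad
\, _1F_2\!\left(\kappa-\tfrac12;\kappa,2\kappa-1;1\right)\le 2.
\]
The problem thus reduces to identifying the set of $\kappa>0$ on which both inequalities hold, after which the substitution $\kappa=\nu+1$ transfers the admissible range to the variable $\nu$.

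To carry this out I would set
\[
G(\kappa):=\, _1F_2\!\left(\kappa-\tfrac12;\kappa,2\kappa-1;(21/4)^{1/3}\right)-\, _1F_2\!\left(\kappa-\tfrac12;\kappa,2\kappa-1;1\right)
\]
and analyze its dependence on $\kappa$. Each term of both series carries the coefficient $\frac{(\kappa-1/2)_n}{(\kappa)_n\,(2\kappa-1)_n\,n!}$, in which the factor $(2\kappa-1)_n$ in the denominator grows with $\kappa$ and dominates, so that both ${}_1F_2$ values, and hence $G$, decrease monotonically in $\kappa$ on the admissible range; the upper-bound quantity $\, _1F_2(\ldots;1)$ is likewise decreasing. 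It then remains to locate the crossover where the binding inequality becomes an equality, which a numerical root computation (displayed in \Cref{fig:thm-GBessel}) places at $\kappa\approx 0.694651$; both inequalities therefore hold for every $\kappa>0.694651$. Finally, translating through $\kappa=\nu+1$ converts $\kappa>0.694651$ into $\nu>0.694651-1=-0.305349$, and \Cref{thm-GBessel} yields the lemniscate starlikeness of $f_3(\nu,z)$ and $f_4(\nu,z)$ for all such $\nu$.

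The main obstacle is the middle step: rigorously establishing the monotonicity of the two ${}_1F_2$ values in $\kappa$ and pinning down the exact threshold $\kappa_0$. The coefficient-wise comparison is delicate near $\kappa=\tfrac12$, where $2\kappa-1$ passes through $0$ and the series parameters degenerate, so one must verify that $\kappa_0>\tfrac12$ and that term-by-term estimates remain valid across the whole interval. Since no closed form for $G$ is available, the precise constant $0.694651$ (and hence $-0.305349$) is obtained numerically rather than symbolically, and this transcendental root-finding is the genuinely non-elementary part of the argument.
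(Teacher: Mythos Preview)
Your proposal is correct and follows essentially the same route as the paper: specialize \Cref{thm-GBessel} with $c=1$ and $|\eta|=1$, numerically locate the threshold $\kappa_0\approx 0.694651$ at which the hypothesis \eqref{en:hypo-1-thm-Bessel} begins to hold, and translate via $\kappa=\nu+1$. The paper makes no attempt at a rigorous monotonicity argument either---it simply states that ``numerically and graphically (see \Cref{fig:thm-GBessel}), it can be shown that the hypothesis of \Cref{thm-GBessel} holds for $\kappa>0.694651$''---so the obstacle you identify is real but is not overcome in the paper itself.
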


\begin{remark}
   We can prove \Cref{thm-GBessel} using an alternative method. It is well-known that 
   \begin{align}\label{eqn:self-CP}
       \sum_{n=0}^{\infty} \left( \sum_{k=0}^n d_k d_{n-k} \right) z^n = \left(\sum_{n=0}^{\infty}  d_n  z^n \right)^2. 
   \end{align}
Using this identity, \eqref{eqn:hypo-1-thm1-appl_B} and  \eqref{eqn:hypo-thm1-appl_B} is equivalent to 
   \begin{align}\label{eqn:self-CP-B1}
\sum_{n=1}^{\infty}  \sum_{k=0}^n  \frac{|\eta|^{k}}{4^k k! |(\kappa)_{k}|}  \frac{|\eta|^{\,n-k}}{4^{\,n-k} (n-k)! |(\kappa)_{n-k}|} =\left(\sum_{n=0}^{\infty} \frac{|\eta|^{n}}{4^n n! (\kappa)_{n}}\right)^2 -1={\left(\, _0F_1\left(;\kappa ;\frac{|\eta|}{4}\right)\right)}^2 -1  = < 1,
\end{align}
and 
\begin{align}\label{eqn:self-CP-B2} \notag
&\sum_{n=1}^{\infty} \left( \left(\tfrac{21}{4}\right)^{\tfrac{n}{3}}- c\right) \sum_{k=0}^n  \frac{|\eta|^{k}}{4^k k! |(\kappa)_{k}|}  \frac{|\eta|^{\,n-k}}{4^{\,n-k} (n-k)! |(\kappa)_{n-k}|}\\&=\left({\left(\, _0F_1\left(;\kappa ;\frac{|\eta| 21^{\tfrac{1}{3}}}{4^{\tfrac{4}{3}}}\right)\right)}^2 -1\right)- c \left( {\left(\, _0F_1\left(;\kappa ;\frac{|\eta|}{4}\right)\right)}^2 -1\right)<c. 
\end{align}
Now, by taking $|\eta|=1$ and $c=1$, numerical verification confirms that the inequalities \eqref{eqn:self-CP-B1} and \eqref{eqn:self-CP-B2} hold simultaneously only for $\kappa >0.694651$, which is consistent with the threshold established earlier. 
\end{remark}

 Regarding the lemniscate starlikeness of Bessel functions, the best results available in the literature are given in \cite{Zayed-Bulboacua}. For our best interest for the comparison, we state the complete theorem as follows

\begin{theorem}\label{thm-Zayed-LS}\cite{Zayed-Bulboacua}
    Assume $\eta \in \mathbb{C}\setminus \{0\}$ and $\kappa \in \mathbb{C}\setminus \mathbb{Z}_0^{+}$ with $\real(\kappa)>0$. 
    \begin{align*}
        0 < \frac{e^{\left|\tfrac{\eta}{\kappa}\right|}-1}{2-e^{\left|\tfrac{\eta}{2\kappa}\right|}}< c,
    \end{align*}
implies $ \mathtt{F_3}(\kappa, \eta, z) \in \mathtt{S}^{\ast}(q_c). $
\end{theorem}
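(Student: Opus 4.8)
The plan is to run the coefficient argument behind \Cref{thm:anlytic-LS} for the explicit sequence $a_{k}=(-\eta)^{k-1}/\!\left(4^{k-1}(k-1)!\,(\kappa)_{k-1}\right)$ attached to $\mathtt{F_3}(\kappa,\eta,z)=z\mathtt{U}_p(z)$, but to replace the sharp weight estimate of \Cref{lemma-basic2} by the cruder bound $n(k+1)-k^2\le 2^{n}$; this single substitution is exactly what converts the present ${}_1F_2$-type conclusions into the elementary exponential form of \cite{Zayed-Bulboacua}. Writing $\mathtt{F_3}(\kappa,\eta,z)=z+\sum_{k\ge 2}a_k z^k$ and $t:=|\eta|/(4|\kappa|)$, I would first record the coefficient bound $|a_{k+1}|\le t^{k}/k!$, which follows from $|(\kappa)_k|\ge |\kappa|^{k}$; this last inequality holds precisely because $\real(\kappa)>0$, the same hypothesis that powers \Cref{lem:identity-2}.

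With this in hand I would reuse the two Cauchy-product expansions from the proof of \Cref{thm:anlytic-LS} for $(zf'(z))^2-(f(z))^2$ and $(f(z))^2$, and estimate $|(zf'/f)^2-1|$ as a quotient. For the denominator the binomial identity $\sum_{k=0}^{n}1/(k!(n-k)!)=2^{n}/n!$ gives $\sum_{k=0}^{n}|a_{k+1}||a_{n-k+1}|\le (2t)^{n}/n!$, whence
\[
\left|\frac{(f(z))^2}{z^2}\right|\ge 1-\sum_{n=1}^{\infty}\frac{(2t)^{n}}{n!}=2-e^{2t}=2-e^{|\eta/(2\kappa)|}.
\]
For the numerator, the sign of $n(k+1)-k^2=k(n-k)+n>0$ lets me drop absolute values, and the bound $n(k+1)-k^2\le\left(\tfrac{21}{4}\right)^{n/3}\le 2^{n}$ from \Cref{lemma-basic2} gives
\[
\frac{\left|(zf'(z))^2-(f(z))^2\right|}{|z|^2}\le\sum_{n=1}^{\infty}2^{n}\sum_{k=0}^{n}|a_{k+1}||a_{n-k+1}|\le\sum_{n=1}^{\infty}\frac{(4t)^{n}}{n!}=e^{4t}-1=e^{|\eta/\kappa|}-1.
\]

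Dividing one estimate by the other yields $|(zf'(z)/f(z))^2-1|\le (e^{|\eta/\kappa|}-1)/(2-e^{|\eta/(2\kappa)|})$, so the hypothesis $0<(e^{|\eta/\kappa|}-1)/(2-e^{|\eta/(2\kappa)|})<c$ forces $|(zf'(z)/f(z))^2-1|<c$, which is the defining inequality for $\mathtt{F_3}(\kappa,\eta,z)\in\mathcal{S}^{\ast}(q_c)$. The step to watch is the \emph{positivity of the denominator}: the quotient bound is legitimate only when $2-e^{|\eta/(2\kappa)|}>0$, but the hypothesis supplies this for free, since the displayed fraction is assumed positive while its numerator $e^{|\eta/\kappa|}-1$ is already positive for $\eta\neq 0$ (and the same lower bound simultaneously guarantees $f(z)\neq 0$ on $\mathbb{D}$, so that $zf'/f$ is a well-defined analytic map). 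The real crux, and the reason the conclusion collapses to a clean exponential, is the numerator estimate: the crude weight $2^{n}$ multiplies the Cauchy-square factor $(2t)^{n}/n!$ to double the exponent from $2t$ to $4t$, a telescoping that the sharper weight $(21/4)^{n/3}$ of \Cref{lemma-basic2} does \emph{not} admit — which is exactly why the companion result \Cref{thm-GBessel} must be phrased through ${}_1F_2$ rather than $\exp$, and why it refines \cite{Zayed-Bulboacua}.
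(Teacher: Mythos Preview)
The paper does not supply its own proof of this theorem---it is quoted verbatim from \cite{Zayed-Bulboacua} for purposes of comparison with the paper's refinements (\Cref{thm-GBessel} and \Cref{thm:thm1-appl_B-complex})---so there is no in-paper argument to set your proposal against.

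That said, your argument is correct and in fact nicely illuminates the relationship the paper draws between its own results and the cited one. You run exactly the Cauchy-product machinery behind \Cref{thm:anlytic-LS} together with the Pochhammer bound $|(\kappa)_k|\ge|\kappa|^{k}$ (valid since $\real(\kappa)>0\ge -1/2$, cf.\ \Cref{lem:identity-2}), but you swap the sharp weight $(21/4)^{n/3}$ of \Cref{lemma-basic2} for the looser $2^{n}$. This is the decisive choice: with $2^{n}$, the Cauchy-square contribution $(2t)^{n}/n!$ doubles cleanly to $(4t)^{n}/n!$, and both numerator and denominator collapse to pure exponentials in $|\eta/\kappa|$ and $|\eta/(2\kappa)|$, reproducing the cited bound exactly. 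By contrast, the paper's own complex-$\kappa$ analogue, \Cref{thm:thm1-appl_B-complex}, keeps the tighter weight $(21/4)^{n/3}$ and so lands on $e^{\sqrt[3]{21}\,|\eta|/(2^{5/3}|\kappa|)}$ in the numerator---sharper, but no longer matching the cited form. Your reconstruction therefore pinpoints precisely the single inequality one must loosen to pass from the paper's framework to the statement of \cite{Zayed-Bulboacua}, and your handling of the side conditions (positivity of $2-e^{|\eta/(2\kappa)|}$ forced by the hypothesis, nonvanishing of $f$ on $\mathbb{D}$) is complete.
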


\begin{remark}\label{remark-1}
 Although \Cref{thm-Zayed-LS} is formulated for complex values of $\kappa$, it is instructive to compare \Cref{thm-Zayed-LS} with \Cref{thm-GBessel} for a fixed $\eta \in \mathbb{C} \setminus \{0\}$ and real $\kappa$. 
For $\eta = \pm 1$ or $\eta = \pm i$, \Cref{thm-GBessel} ensures that $\mathtt{F_3}(\kappa, \eta, z) \in \mathtt{S}^{\ast}(q_1)$ whenever $\kappa > 0.694651$, whereas the same conclusion follows from \Cref{thm-Zayed-LS} only when $\kappa > 1.6459$. 
Hence, \Cref{thm-GBessel} provides a better admissible range for $\kappa$.
\end{remark}

Next, to present a complete comparison of our results with \Cref{thm-Zayed-LS}, we now state an analogue of \Cref{thm-GBessel} that is valid for $\eta \in \mathbb{C}\setminus\{0\}$ and $\kappa \in \mathbb{C}\setminus\mathbb{Z}_0^{+}$. 
In this setting, we make use of the inequality $|(\kappa)_k| \geq |\kappa|^{k}$ and deduce that the condition \eqref{eqn:hypo-1-thm1-appl_B} is equivalent to
\[
e^{\frac{|\eta|}{2|\kappa|}} < 2,
\]
which clearly holds whenever $|\kappa| > \dfrac{|\eta|}{2\ln(2)}$. 
This equivalence follows from the application of \Cref{lem:identity-2} to the series in \eqref{eqn:hypo-1-thm1-appl_B}, yielding
\begin{align*}
\sum_{n=1}^{\infty}\sum_{k=0}^n 
\frac{|\eta|^{k}}{4^{k} k! |(\kappa)_{k}|}
\frac{|\eta|^{\,n-k}}{4^{\,n-k} (n-k)! |(\kappa)_{n-k}|}
&< 
\sum_{n=1}^{\infty} \frac{|\eta|^{n} 2^n}{4^n |\kappa|^{n} n!}
= e^{\frac{|\eta|}{2|\kappa|}} - 1.
\end{align*}

Similarly, the inequality \eqref{eqn:hypo-thm1-appl_B} reduces to
\begin{align*}
\sum_{n=1}^{\infty}
\left( \left(\tfrac{21}{4}\right)^{\tfrac{n}{3}} - c \right)
\sum_{k=0}^n 
\frac{|\eta|^{k}}{4^{k} k! |(\kappa)_{k}|}
\frac{|\eta|^{\,n-k}}{4^{\,n-k} (n-k)! |(\kappa)_{n-k}|}
&< 
-c\left( e^{\frac{|\eta|}{2|\kappa|}} - 1 \right)
+ e^{\frac{\sqrt[3]{21}|\eta|}{2^{5/3}|\kappa|}} - 1 < c,
\end{align*}
which further implies
\[
e^{\frac{\sqrt[3]{21}|\eta|}{2^{5/3}|\kappa|}} - c\, e^{\frac{|\eta|}{2|\kappa|}} < 1.
\]
We now formulate the corresponding theorem without detailed proof.

\begin{theorem}\label{thm:thm1-appl_B-complex}
Let $\eta \in \mathbb{C}\setminus\{0\}$ and $\kappa \in \mathbb{C}\setminus\mathbb{Z}_0^{+}$ satisfy $|\kappa| > \dfrac{|\eta|}{2\ln(2)}$. 
If
\begin{align}\label{eqn:hypo-1-thm1-appl_B-complex}
e^{\frac{\sqrt[3]{21}|\eta|}{2^{5/3}|\kappa|}} - c\, e^{\frac{|\eta|}{2|\kappa|}}< 1, 
\end{align}
then $\mathtt{F_3}(\kappa, \eta, z) \in \mathtt{S}^{\ast}(q_c).$
\end{theorem}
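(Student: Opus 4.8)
The plan is to apply \Cref{thm:anlytic-LS} to the coefficient sequence $a_k=(-\eta)^{k-1}/\bigl(4^{k-1}(k-1)!\,(\kappa)_{k-1}\bigr)$ attached to $\mathtt{F_3}(\kappa,\eta,z)=z\,\mathtt{U}_p(z)$, in exactly the manner used for \Cref{thm-GBessel}. The one structural difference is that, with $\kappa$ and $\eta$ now complex, the exact Cauchy-product evaluation furnished by \Cref{lem:identity-1} is unavailable, so it must be replaced by the majorant supplied by \Cref{lem:identity-2}. First I would record that $|a_{k+1}|=|\eta|^{k}/\bigl(4^{k}k!\,|(\kappa)_{k}|\bigr)$, so that both hypotheses \eqref{eqn:hypo-1-thm1} and \eqref{eqn:hypo-thm1} become assertions about the weighted double sum $\sum_{n\ge1}w_n\sum_{k=0}^{n}\frac{|\eta|^{k}}{4^{k}k!\,|(\kappa)_{k}|}\frac{|\eta|^{\,n-k}}{4^{\,n-k}(n-k)!\,|(\kappa)_{n-k}|}$, with $w_n=1$ in the first case and $w_n=(21/4)^{n/3}-c$ in the second.

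Next I would invoke \Cref{lem:identity-2} with $b=\kappa$, which collapses the inner Cauchy sum to the bound $\sum_{k=0}^{n}|\cdots|\le 2^{n}/(n!\,|\kappa|^{n})$. Combining the prefactor $|\eta|^{n}/4^{n}$ with $2^{n}/|\kappa|^{n}$ produces $(|\eta|/(2|\kappa|))^{n}/n!$, so each double sum is dominated by an exponential series. Taking $w_n=1$ gives $e^{|\eta|/(2|\kappa|)}-1$, whence the first hypothesis \eqref{eqn:hypo-1-thm1} holds precisely when $e^{|\eta|/(2|\kappa|)}<2$, i.e. $|\kappa|>|\eta|/(2\ln 2)$ — exactly the standing admissibility condition.

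For the second hypothesis I would split the weight $(21/4)^{n/3}-c$ into its two pieces. The factor $(21/4)^{n/3}$ merely rescales the exponent: since $(21/4)^{1/3}\cdot\tfrac12=\sqrt[3]{21}/2^{5/3}$, the sum $\sum_{n\ge1}(21/4)^{n/3}(|\eta|/(2|\kappa|))^{n}/n!$ equals $e^{\sqrt[3]{21}|\eta|/(2^{5/3}|\kappa|)}-1$, while the $-c$ piece contributes $-c\bigl(e^{|\eta|/(2|\kappa|)}-1\bigr)$. Collecting terms, the left side of \eqref{eqn:hypo-thm1} is bounded above by $e^{\sqrt[3]{21}|\eta|/(2^{5/3}|\kappa|)}-c\,e^{|\eta|/(2|\kappa|)}-1+c$, and requiring this to be strictly less than $c$ is exactly the hypothesis \eqref{eqn:hypo-1-thm1-appl_B-complex}. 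Both conditions of \Cref{thm:anlytic-LS} then hold, yielding $\mathtt{F_3}(\kappa,\eta,z)\in\mathtt{S}^{\ast}(q_c)$.

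Since every estimate is an explicit exponential inequality, there is no deep analytic difficulty; the delicate points are purely bookkeeping. The main obstacle I expect is twofold: (i) the term-by-term majorisation is legitimate only because the weight $(21/4)^{n/3}-c$ is strictly positive for all $n\ge1$ and $c\in(0,1]$, which follows from $(21/4)^{n/3}\ge(21/4)^{1/3}>1\ge c$ and must be checked so that passing from the exact identity of \Cref{thm-GBessel} to the present inequality does not reverse the bound; and (ii) the Pochhammer majorant $|(\kappa)_k|\ge|\kappa|^{k}$ underlying \Cref{lem:identity-2} requires $\real(\kappa)\ge-\tfrac12$, so this constraint is tacitly part of the admissible domain of $\kappa$ and should be stated alongside the condition $\kappa\in\mathbb{C}\setminus\mathbb{Z}_0^{+}$.
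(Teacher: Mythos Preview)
Your proposal is correct and follows essentially the same approach as the paper: apply \Cref{thm:anlytic-LS} with the Cauchy-sum majorant from \Cref{lem:identity-2} in place of the exact identity \Cref{lem:identity-1}, reducing both hypotheses to the stated exponential inequalities. Your two caveats are well taken; in particular the constraint $\real(\kappa)\ge -\tfrac12$ needed for \Cref{lem:identity-2} is used but not stated as a hypothesis in the paper's formulation of the theorem.
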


\begin{remark}\label{remark-2}
We emphasise here the independent significance of \Cref{thm-GBessel} and \Cref{thm:thm1-appl_B-complex}. 

\begin{enumerate}
    \item Note that \Cref{thm:thm1-appl_B-complex} also remains valid for real $\kappa$. 
By revisiting the case discussed in \Cref{remark-1}, it can be shown that $\mathtt{F_3}(\kappa, \eta, z) \in \mathtt{S}^{\ast}(q_1)$ whenever $\kappa > 0.840149$. 
Therefore, for real $\kappa > 0$, \Cref{thm-GBessel} provides a comparatively better bound than \Cref{thm:thm1-appl_B-complex}.
\item 
Next, consider the case when $\kappa = \tfrac{3}{2} + i a$, where $a \in \mathbb{R}$, and $\eta \in \mathbb{C}$ such that $|\eta| = 1$. Clearly, for all $a \in \mathbb{R}$, we have
\[
|\kappa| = \sqrt{\frac{9}{4} + a^2} > \frac{1}{2 \ln(2)}.
\]
Hence, the right-hand side of \eqref{eqn:hypo-1-thm1-appl_B-complex} can be rewrite as
\begin{align}\label{eq:h-of-a}
    \mathfrak{h}(a) = e^{\frac{\sqrt[3]{21}}{ 2^{5/3} \sqrt{a^2 + \tfrac{9}{4}}}} - e^{\frac{1}{2 \sqrt{a^2 + \tfrac{9}{4}}}}.
\end{align}
By elementary analysis, it can be shown that $\mathfrak{h}(a)$ attains its maximum at $a = 0$. Numerically,
\begin{align}
    \mathfrak{h}(0) 
    &= e^{\frac{\sqrt[3]{21}}{ 2^{5/3} \times \tfrac{3}{2}}} - e^{\frac{1}{2 \times \tfrac{3}{2}}} 
    = e^{\frac{\sqrt[3]{21}}{ 3 \times 2^{2/3}}} - e^{\tfrac{1}{3}} 
    = 0.389244.
\end{align}
Therefore, for this specific choice of $\kappa$ and $\eta$, \Cref{thm:thm1-appl_B-complex} ensures that 
\[
\mathtt{F_3}\big(\tfrac{3}{2} + i a, \eta, z\big) \in \mathtt{S}^{\ast}(q_1) \quad \text{for all } a \in \mathbb{R}.
\]

On the other hand, consider
\begin{align}\label{eq:k-of-a}
    \mathfrak{k}(a) = \frac{e^{\tfrac{1}{\sqrt{a^2 + \tfrac{9}{4}}}} - 1}{2 - e^{\tfrac{1}{2\sqrt{a^2 + \tfrac{9}{4}}}}}.
\end{align}
A straightforward computation shows that $\mathfrak{k}(a)$ also achieves its maximum at $a = 0$, and 
\begin{align}
    \mathfrak{k}(0) = \frac{e^{\tfrac{2}{3}} - 1}{2 - e^{\tfrac{1}{3}}} = 1.56809.
\end{align}
Moreover, $\mathfrak{k}(a) < 1$ whenever $|a| > 1.15045$. Consequently, by \Cref{thm-Zayed-LS}, it follows that
\[
\mathtt{F_3}\big(\tfrac{3}{2} + i a, \eta, z\big) \in \mathtt{S}^{\ast}(q_1)
\quad \text{for all } a \in \mathbb{R} \text{ with } |a| > 1.15045.
\]
Hence, it can be concluded that \Cref{thm:thm1-appl_B-complex} provides a significantly stronger result than \Cref{thm-Zayed-LS}.
\end{enumerate}
\end{remark}

\subsection{Example involving  cross-product of Bessel functions}\label{sec:Appl-CP-Bessel}

In this part we are considering  cross-product of Bessel and modified Bessel functions, which have a power series \cite{Al-Kharsani-Bessel-prod}  representation  
If $\nu > -1$ and $z \in \mathbb{C}$, then the following power series representation holds:
\begin{align}\label{eqn:corssproduct-Bessel}
  J_{\nu+1}(z) I_\nu(z) + J_\nu(z) I_{\nu+1}(z) = 2 \sum_{n \geq 0} \frac{(-1)^n \left( \frac{z}{2} \right)^{2\nu + 4n + 1}}{n! \, \Gamma(\nu + n + 1)\Gamma(\nu + 2n + 2)}.  
\end{align}

We consider the normalised form of the cross-product \eqref{eqn:corssproduct-Bessel} defined as 
\begin{align}\label{eqn:corssproduct-Bessel-normal}\notag
 \mathcal{CP}_{B}(\nu, z)&= 2^{2\nu} z^{-\tfrac{\nu}{2}+\tfrac{3}{4}} \Gamma(\nu + 1) \Gamma(\nu + 2) \left( J_{\nu+1}(\sqrt[4]{z}) I_\nu(\sqrt[4]{z}) + J_\nu(\sqrt[4]{z}) I_{\nu+1}(\sqrt[4]{z})\right) \\\notag
 &  = \sum_{n \geq 0} \frac{(-1)^n \Gamma(\nu + 1) \Gamma(\nu + 2) z^{n+1}}{n! \, \Gamma(\nu + n + 1)\Gamma(\nu + 2n + 2) 16^n}\\
 &= z+ \sum_{n \geq 2} \frac{(-1)^{n-1} \Gamma(\nu + 1) \Gamma(\nu + 2) z^{n}}{(n-1)! \, \Gamma(\nu + n )\Gamma(\nu + 2n ) 16^{n-1}}. 
\end{align}
Now, recall well-known identities involving Pochhammer symbol and gamma functions: 
\[\Gamma(\nu+n)= (\nu)_n\Gamma(\nu)=(\nu+1)_{n-1}\Gamma(\nu+1)\] 
and 
\begin{align}
\Gamma(\nu+2n)= (\nu)_{2n}\Gamma(\nu) =   2^{2n} \left(\tfrac{\nu}{2}\right)_n \left(\tfrac{\nu+1}{2}\right)_n \Gamma(\nu)=2^{2n-2} \left(\tfrac{\nu+2}{2}\right)_{n-1} \left(\tfrac{\nu+3}{2}\right)_{n-1} \Gamma(\nu+2). 
\end{align}
Using this, we can rewrite \eqref{eqn:corssproduct-Bessel-normal} as 
\begin{align}\label{eqn:corssproduct-Bessel-normal1}
 \mathcal{CP}_{B}(\nu, z)
 &= z+ \sum_{n \geq 2} \frac{(-1)^{n-1}  z^{n}}{(n-1)!\, \left(\nu+1\right)_{n-1} \, \left(\frac{\nu+2}{2}\right)_{n-1} \left(\frac{\nu+3}{2}\right)_{n-1} 64^{n-1}}. 
\end{align}
The geometric properties of $\mathcal{CP}_{B}(\nu, z)$ is studied in \cite{Al-Kharsani-Bessel-prod,Baricz-product-Bessel,Baricz-CP-Bessel}. It is shown in \cite{Al-Kharsani-Bessel-prod} that $\mathcal{CP}_{B}(\nu, z)$ is starlike in $\mathbb{D}$ and all of its derivatives are close-to-convex (and hence univalent) there if and only if $\nu \ge \nu_\ast$, where $\nu_\ast \approx 0.94$ is the unique root of the equation on $(-1,\infty)$
\[
(\nu - 1) J_{\nu}(1) I_{\nu+1}(1)
+ (\nu - 1) J_{\nu+1}(1) I_{\nu}(1)
= J_{\nu}(1) I_{\nu}(1).
\]
In  \cite{Baricz-product-Bessel,Baricz-CP-Bessel}, radius of starlikeness and convexity of three different normalized forms of $\mathcal{CP}_{B}(\nu, z)$ have been studied by using infinite factorization involving real zeroes of $\mathcal{CP}_{B}(\nu, z)$. Following the notation of \cite{Baricz-product-Bessel}, one of the results in \cite{Baricz-product-Bessel} can be stated as: 
 The function $h_{\nu}=\mathcal{CP}_{B}(\nu, .)$ is convex of order $\alpha$ in $D$ if and only if 
\[
\nu \geq \nu^{c}_{\alpha}(h_{\nu}),
\]
where $\nu^{c}_{\alpha}(h_{\nu})$ is the unique root of the equation 
\[
h_{\nu}''(1) + (1 - \alpha) h_{\nu}'(1) = 0.
\]   
In particular, $\alpha=0$ we have $h_{\nu}$ is convex for $\nu \geq \nu^{c}_{0}  (h_{\nu}) \approx -0.893305$, where $\nu^{c}_{0}  (h_{\nu})$ is the root of  $h_{\nu}''(1) +  h_{\nu}'(1) = 0$ for $\nu>-1$. 

In \cite{Baricz-CP-Bessel}, it is shown that the function $h_{\nu}$ is starlike of order $\alpha$ in $D$ if and only if 
$
\nu \geq \nu_{\alpha}^{*}(h_{\nu}),
$
where $\nu_{\alpha}^{*}(h_{\nu})$ is the unique root of the equation
\begin{align} \label{eq:star_condition}
J_{\nu}(1) I_{\nu}(1) - \big( 2\alpha + \nu - 1 \big) \big( J_{\nu+1}(1) I_{\nu}(1) + J_{\nu}(1) I_{\nu+1}(1) \big) = 0.
\end{align}
In particular, $h_{\nu}$ is starlike in $D$ if and only if 
$
\nu \geq \nu_{0}^{*}(h_{\nu}) \simeq -0.94\ldots,
$
where $\nu_{0}^{*}(h_{\nu})$ is the unique root of equation~\eqref{eq:star_condition} when $\alpha = 0$.

Next,  we state our result for lemniscate starlike of $\mathcal{CP}_{B}(\nu, z)$  for real $\nu>-1$ by applying \eqref{eqn:self-CP}. 
\begin{theorem}
    For $\nu \in \mathbb{R}$ with $\nu>-1$, if 
\begin{align}
   \left( {}_0F_{3}\left(- ; \nu+1, \tfrac{\nu+2}{2}, \tfrac{\nu+3}{2};  \tfrac{\sqrt[3]{21}}{64 \sqrt[3]{4} }\right)\right)^2 -1< c \left( {}_0F_{3}\left(- ; \nu+1, \tfrac{\nu+2}{2}, \tfrac{\nu+3}{2};  \tfrac{1}{64}\right)\right)^2< 2 c,
\end{align}
then $\mathcal{CP}_{B}(\nu, z) \in \mathcal{S}^{\ast}(q_c)$. In particular, $\mathcal{CP}_{B}(\nu, z) \in \mathcal{S}^{\ast}(q_1)$ for $\nu > \nu_0 = - 0.931564$.  
Here, $\nu_0$ is the root of  $\left( {}_0F_{3}\left(- ; \nu+1, \tfrac{\nu+2}{2}, \tfrac{\nu+3}{2};  \tfrac{1}{64}\right)\right)^2=2$.   
\end{theorem}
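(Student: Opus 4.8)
The plan is to apply \Cref{thm:anlytic-LS} directly through the self–convolution identity \eqref{eqn:self-CP}, mirroring the alternative derivation given in the remark following \Cref{thm-GBessel}, rather than routing through \Cref{lem:identity-1}. First I would read off from \eqref{eqn:corssproduct-Bessel-normal1} that the Taylor coefficients of $\mathcal{CP}_{B}(\nu,z)$ are
\begin{align*}
a_{n}=\frac{(-1)^{n-1}}{(n-1)!\,(\nu+1)_{n-1}\left(\tfrac{\nu+2}{2}\right)_{n-1}\left(\tfrac{\nu+3}{2}\right)_{n-1}\,64^{\,n-1}},\qquad a_1=1.
\end{align*}
Since $\nu>-1$, each factor $(\nu+1)_{k}$, $\left(\tfrac{\nu+2}{2}\right)_{k}$, $\left(\tfrac{\nu+3}{2}\right)_{k}$ is strictly positive, so setting
\begin{align*}
d_{k}:=|a_{k+1}|=\frac{1}{k!\,(\nu+1)_{k}\left(\tfrac{\nu+2}{2}\right)_{k}\left(\tfrac{\nu+3}{2}\right)_{k}\,64^{\,k}}
\end{align*}
produces a nonnegative sequence with $d_0=1$ whose ordinary generating function is
\begin{align*}
\sum_{k=0}^{\infty}d_{k}\,z^{k}={}_0F_{3}\!\left(-;\nu+1,\tfrac{\nu+2}{2},\tfrac{\nu+3}{2};\tfrac{z}{64}\right).
\end{align*}

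Next I would evaluate both hypotheses of \Cref{thm:anlytic-LS} using \eqref{eqn:self-CP}. Taking $z=1$ in $\sum_{n\ge0}\bigl(\sum_{k=0}^{n}d_kd_{n-k}\bigr)z^{n}=\bigl(\sum_{n\ge0}d_nz^{n}\bigr)^{2}$ and discarding the $n=0$ term $d_0^{2}=1$ gives
\begin{align*}
\sum_{n=1}^{\infty}\sum_{k=0}^{n}|a_{k+1}||a_{n-k+1}|=\left({}_0F_{3}\!\left(-;\nu+1,\tfrac{\nu+2}{2},\tfrac{\nu+3}{2};\tfrac{1}{64}\right)\right)^{2}-1,
\end{align*}
so condition \eqref{eqn:hypo-1-thm1} becomes $\bigl({}_0F_{3}(\,\cdot\,;\tfrac{1}{64})\bigr)^{2}<2$, which is exactly the right-hand inequality $c\bigl({}_0F_{3}(\,\cdot\,;\tfrac{1}{64})\bigr)^{2}<2c$. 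For \eqref{eqn:hypo-thm1} I would split the weight $\bigl(\tfrac{21}{4}\bigr)^{n/3}-c$ and evaluate the self-convolution at $z=\bigl(\tfrac{21}{4}\bigr)^{1/3}$, noting $\bigl(\tfrac{21}{4}\bigr)^{1/3}/64=\tfrac{\sqrt[3]{21}}{64\sqrt[3]{4}}$, which yields
\begin{align*}
\sum_{n=1}^{\infty}\left(\tfrac{21}{4}\right)^{n/3}\sum_{k=0}^{n}d_kd_{n-k}=\left({}_0F_{3}\!\left(-;\nu+1,\tfrac{\nu+2}{2},\tfrac{\nu+3}{2};\tfrac{\sqrt[3]{21}}{64\sqrt[3]{4}}\right)\right)^{2}-1.
\end{align*}
Substituting both evaluations into \eqref{eqn:hypo-thm1} and cancelling the additive constants (the $+c$ from the constant term cancels the threshold $c$), condition \eqref{eqn:hypo-thm1} collapses precisely to the left-hand inequality of the statement. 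Hence both hypotheses of \Cref{thm:anlytic-LS} hold under the stated chain, giving $\mathcal{CP}_{B}(\nu,z)\in\mathcal{S}^{\ast}(q_c)$.

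For the specialization $c=1$, the right-hand inequality reads $\bigl({}_0F_{3}(\,\cdot\,;\tfrac{1}{64})\bigr)^{2}<2$. Because each $d_k$ with $k\ge1$ is strictly decreasing in $\nu$ on $(-1,\infty)$ while $d_0=1$, the quantity $\bigl({}_0F_{3}(\,\cdot\,;\tfrac{1}{64})\bigr)^{2}$ is strictly decreasing in $\nu$ and equals $2$ exactly at the stated root $\nu_0=-0.931564$; thus this constraint holds precisely for $\nu>\nu_0$. It then remains to confirm that for $c=1$ the left-hand inequality $\bigl({}_0F_{3}(\,\cdot\,;\tfrac{\sqrt[3]{21}}{64\sqrt[3]{4}})\bigr)^{2}-1<\bigl({}_0F_{3}(\,\cdot\,;\tfrac{1}{64})\bigr)^{2}$ is automatically satisfied on $(\nu_0,\infty)$, so that \eqref{eqn:hypo-1-thm1} is the binding constraint. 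I expect this final verification to be the only genuine obstacle: since the two ${}_0F_3$ evaluations share the same parameters and differ only in a single small real argument ($\tfrac{1}{64}$ versus $\tfrac{\sqrt[3]{21}}{64\sqrt[3]{4}}\approx0.027$), a short monotonicity-in-$\nu$ estimate combined with the boundary evaluation at $\nu_0$ (or the numerical/graphical check employed elsewhere in the paper) should close the gap and establish lemniscate starlikeness for all $\nu>\nu_0$.
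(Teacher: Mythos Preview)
Your proposal is correct and matches the paper's approach exactly: the paper likewise applies \Cref{thm:anlytic-LS} via the self-convolution identity \eqref{eqn:self-CP} to obtain the two ${}_0F_3$ conditions, and for $c=1$ it resolves the ``binding constraint'' question numerically by computing the root $\nu_1\approx-0.933296$ of the left-hand equality and $\nu_2\approx-0.931564$ of the right-hand equality, then taking $\nu_0=\max\{\nu_1,\nu_2\}=\nu_2$. This is precisely the numerical check you anticipate at the end of your argument.
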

We omit the detailed proof. However, for the special case $c=1$, the range of $\nu$ is determined by the numerical calculation of the roots of
\begin{enumerate}
    \item[(i)] the identity
    \begin{align}
   \left( {}_0F_{3}\left(- ; \nu+1, \tfrac{\nu+2}{2}, \tfrac{\nu+3}{2};  \tfrac{\sqrt[3]{21}}{64 \sqrt[3]{4} }\right)\right)^2 -1=  \left( {}_0F_{3}\left(- ; \nu+1, \tfrac{\nu+2}{2}, \tfrac{\nu+3}{2};  \tfrac{1}{64}\right)\right)^2
\end{align}
which is $\nu_1= -0.933296$,
\item[(ii)] the identity 
\begin{align}
 \left( {}_0F_{3}\left(- ; \nu+1, \tfrac{\nu+2}{2}, \tfrac{\nu+3}{2};  \tfrac{1}{64}\right)\right)^2= 2 
\end{align}
which is $\nu_2= -0.931564$.
\end{enumerate}
Thus $\nu_0= \max\{\nu_1, \nu_2\}=\nu_2$. 

\begin{remark}
    Although a rigorous theoretical justification has not yet been established, graphical evidence strongly suggests that the best lower bound for $\nu$ is given by $\nu_0 \approx -0.931$. This observation is clearly supported by the behavior depicted in \Cref{fig:CPB-Bessel}.
\begin{figure}[h!]
    \centering
    \includegraphics[width=0.5\linewidth]{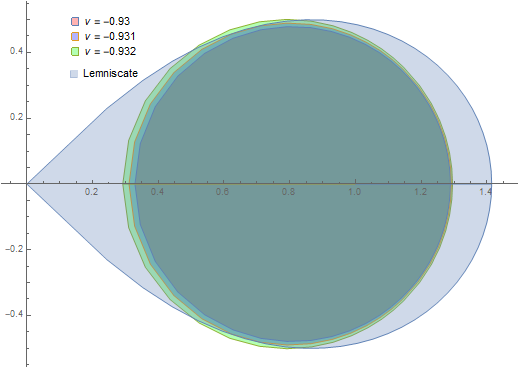}
    \caption{Best lower bound of $\nu$ such that $\mathcal{CP}_{B}(\nu, z) \in S^{\ast}(q_c)$.}
    \label{fig:CPB-Bessel}
\end{figure}

The graphical pattern indicates that for $\nu < \nu_0$, the corresponding mapping fails to remain entirely within the desired domain, whereas for $\nu \ge \nu_0$, the mapping lies completely inside. This provides compelling visual support for considering $\nu_0$ as the optimal threshold.

\end{remark}



\subsection{Example involving Struve functions}\label{sec:Appl-Struve}
Struve functions of the first kind, denoted by $S_\nu(z)$, are solutions to non-homegenous Bessel’s differential equation:
\begin{align} \label{eqn:SF-ODE}
z^2 w''(z) + z w'(z) + (z^2 - \nu^2) w(z) = \frac{4 z^{\nu+1}}{2^{\nu+1}\sqrt{\pi} \Gamma\left(\nu+1/2\right)},
\end{align}
where $\nu$ is a real or complex parameter. In the unit disk $\mathbb{D}$, $S_\nu(z)$ is analytic because it has the power series expansion
\[
S_\nu(z) = \sum_{n=0}^\infty \frac{(-1)^n}{\Gamma(n + 3/2) \,\Gamma(n+\nu+ 3/2)} \left( \frac{z}{2} \right)^{2n+\nu+1},
\]
which is entire. The modified Struve function is given as 
\[
L_\nu(z) = \sum_{n=0}^\infty \frac{1}{\Gamma(n + 3/2) \,\Gamma(n+\nu+ 3/2)} \left( \frac{z}{2} \right)^{2n+\nu+1}.
\]

Struve functions have gone through with several generalization \cite{Yagmur-Orhan, Ali-Mondal-Nisar}. The generalization given in \cite{Yagmur-Orhan} can be represented as a power series as 
\begin{align}\label{eqn:GSF-Orhan}
   \mathcal{W}_{\nu, b, \eta}(z)= \sum_{n=0}^{\infty} \frac{(-\eta)^n}{\Gamma(n + 3/2) \,\Gamma\left(n+\nu+ \tfrac{b+2}{2} \right)}\left( \frac{z}{2} \right)^{2n+\nu+1}.
\end{align}
The function $\mathcal{W}_{\nu, b, \eta}(z)$ is the solution of the differential equation 
\begin{align} \label{eqn:GSF-ODE}
z^2 w''(z) + b z w'(z) + (\eta z^2 - \nu^2+ (1-b) \nu) w(z) = \frac{4 z^{\nu+1}}{2^{\nu+1}\sqrt{\pi} \Gamma\left(\nu+b/2\right)}. 
\end{align}

For inclusion in geometric function theory, both $\mathcal{W}_{\nu, b, \eta}(z)$ and ${}_a\mathcal{U}_{\nu, b, \eta}(z)$ can be respectifully normalised as 
\begin{align}\label{eqn:GSF-Orhan-normal} 
   \mathtt{W}_{\nu, b, \eta}(z)&= 2^{\nu} \sqrt{\pi} z^{-\nu} \Gamma\left(\nu+ \tfrac{b+2}{2} \right) \mathcal{W}_{\nu, b, \eta}(\sqrt{z}) 
    =z+\sum_{n=2}^{\infty} \tfrac{(-\eta)^{n-1}}{\left(\tfrac{3}{2}\right)_{n-1} \,\left(\kappa_{s}  \right)_{n-1} 4^{n-1}} z^n,
\end{align}
and 
\begin{align}\label{eqn:GSF-a-normal} 
   {}_a\mathtt{U}_{\nu, b, \eta}(z)&= 2^{\nu} \sqrt{\pi} z^{-\nu} \Gamma\left(\nu+ \tfrac{b+2}{2} \right) \mathcal{U}_{\nu, b, \eta}(\sqrt{z})    
   =z+\sum_{n=2}^{\infty} \tfrac{(-\eta)^{n-1}}{\left(\tfrac{3}{2}\right)_{n-1} \,\left(\kappa_{s} \right)_{a(n-1)} 4^{n-1}}z^{n}
\end{align}
here $\kappa_{s}=\nu+ \tfrac{b+2}{2}.$ 

Applying identity \eqref{eqn:self-CP}, we have the following relation:
\begin{align*}
\sum_{n=1}^{\infty} \sum_{k=0}^n \tfrac{|\eta|^{k}}{\left(\tfrac{3}{2}\right)_{k} \,\left(\kappa_{s}  \right)_{k} 4^{k}} \tfrac{|\eta|^{n-k}}{\left(\tfrac{3}{2}\right)_{n-k} \,\left(\kappa_{s}  \right)_{n-k} 4^{n-k}}
&=\left(\sum_{n=0}^{\infty}  \tfrac{|\eta|^{n}}{\left(\tfrac{3}{2}\right)_{n} \,\left(\kappa_{s}  \right)_{n} 4^{n}} \right)^2-1\\
&=\left[\,{}_{1}F_{2}\!\left(1; \tfrac{3}{2},\ \kappa_{s};\;\frac{|\eta|}{4}\right)\right]^{2} - 1. 
\end{align*}
Similarly, 
\begin{align*}
\sum_{n=1}^{\infty} \left(\tfrac{21}{4}\right)^{n/3}\sum_{k=0}^n \tfrac{|\eta|^{k}}{\left(\tfrac{3}{2}\right)_{k} \,\left(\kappa_{s}  \right)_{k} 4^{k}} \tfrac{|\eta|^{n-k}}{\left(\tfrac{3}{2}\right)_{n-k} \,\left(\kappa_{s}  \right)_{n-k} 4^{n-k}}
&=\left[\,{}_{1}F_{2}\!\left(1; \tfrac{3}{2},\ \kappa_{s};\;\frac{\sqrt[3]{21}|\eta|}{4\; \sqrt[3]{4}}\right)\right]^{2} - 1. 
\end{align*}
Using \Cref{thm:anlytic-LS}, we have the following result: 
\begin{theorem}\label{thm:Lem-GStruve}
   Suppose that $\kappa_s>0$ and $\eta \in \mathbb{C}\setminus \{0\}$. If 
    \begin{align}\label{eqn:hypo-Lem-GStruve}
        \left[\,{}_{1}F_{2}\!\left(1; \tfrac{3}{2},\ \kappa_{s};\;\frac{\sqrt[3]{21}|\eta|}{4\; \sqrt[3]{4}}\right)\right]^{2} - 1< c \left[\,{}_{1}F_{2}\!\left(1; \tfrac{3}{2},\ \kappa_{s};\;\frac{|\eta|}{4}\right)\right]^{2} < 2 c
    \end{align}
    for $c \in (0, 1]$, then $\mathtt{W}_{\nu, b, \eta}(z) \in S^{\ast}(q_c)$. 
\end{theorem}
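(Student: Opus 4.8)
The plan is to derive \Cref{thm:Lem-GStruve} as an immediate instance of the master criterion \Cref{thm:anlytic-LS}, applied to the normalized generalized Struve function $\mathtt{W}_{\nu,b,\eta}$ of \eqref{eqn:GSF-Orhan-normal}. First I would read off its coefficients, namely $a_1=1$ and $a_{k+1}=\frac{(-\eta)^{k}}{(\tfrac{3}{2})_k(\kappa_s)_k\,4^k}$ for $k\ge 1$. Because $\kappa_s>0$ and $\tfrac{3}{2}>0$, both Pochhammer symbols are strictly positive, so $|(\kappa_s)_k|=(\kappa_s)_k$ and $|(\tfrac{3}{2})_k|=(\tfrac{3}{2})_k$; combined with $|(-\eta)^k|=|\eta|^k$ this produces the clean modulus $|a_{k+1}|=\frac{(|\eta|/4)^k}{(\tfrac{3}{2})_k(\kappa_s)_k}$. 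This is the only place where the hypotheses $\kappa_s>0$ and $\eta\in\mathbb{C}\setminus\{0\}$ enter, and it is precisely what turns the subsequent summations into exact equalities rather than mere bounds.

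Next I would reduce the two double sums appearing in the hypotheses \eqref{eqn:hypo-1-thm1} and \eqref{eqn:hypo-thm1} of \Cref{thm:anlytic-LS} to closed form by means of the self-Cauchy-product identity \eqref{eqn:self-CP}. Setting $d_n=\frac{(|\eta|/4)^n}{(\tfrac{3}{2})_n(\kappa_s)_n}$, the inner convolution satisfies $\sum_{k=0}^n|a_{k+1}||a_{n-k+1}|=\sum_{k=0}^n d_k d_{n-k}$, which is the $n$-th Taylor coefficient of $\left(\sum_{n\ge 0}d_n z^n\right)^2$ evaluated at $z=1$. Since the choice $a=1$ forces $(1)_n=n!$, the generating series collapses to $\sum_{n\ge 0}d_n={}_1F_2\!\left(1;\tfrac{3}{2},\kappa_s;|\eta|/4\right)$, which is entire ($p=1\le q=2$); its absolute convergence legitimizes evaluating the Cauchy product at the boundary point $z=1$. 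Subtracting the $n=0$ term then gives $\sum_{n\ge 1}\sum_{k=0}^n|a_{k+1}||a_{n-k+1}|=\left[{}_1F_2\!\left(1;\tfrac{3}{2},\kappa_s;|\eta|/4\right)\right]^2-1$. For the weighted sum I would absorb the factor $\left(\tfrac{21}{4}\right)^{n/3}=r^{n}$, with $r=\left(\tfrac{21}{4}\right)^{1/3}$, into the sequence by setting $\tilde d_n=r^n d_n$, so that $r^n\sum_{k=0}^n d_k d_{n-k}=\sum_{k=0}^n\tilde d_k\tilde d_{n-k}$ and the same identity yields $\left[{}_1F_2\!\left(1;\tfrac{3}{2},\kappa_s;\tfrac{\sqrt[3]{21}|\eta|}{4\sqrt[3]{4}}\right)\right]^2-1$, using $r|\eta|/4=\sqrt[3]{21}|\eta|/(4\sqrt[3]{4})$. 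These are exactly the two identities recorded immediately before the theorem.

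Finally I would substitute these closed forms into the two hypotheses and read off \eqref{eqn:hypo-Lem-GStruve}. Condition \eqref{eqn:hypo-1-thm1} becomes $\left[{}_1F_2\!\left(1;\tfrac{3}{2},\kappa_s;|\eta|/4\right)\right]^2<2$, which is the right inequality of \eqref{eqn:hypo-Lem-GStruve} after dividing by $c>0$. For condition \eqref{eqn:hypo-thm1}, expanding $\sum_{n\ge 1}\left(\left(\tfrac{21}{4}\right)^{n/3}-c\right)\sum_{k=0}^n|a_{k+1}||a_{n-k+1}|$ as the difference of the two closed forms, the additive $\pm c$ contributions cancel and the requirement $<c$ collapses exactly to the left inequality of \eqref{eqn:hypo-Lem-GStruve}. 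With both hypotheses of \Cref{thm:anlytic-LS} thus verified for $c\in(0,1]$, the conclusion $\mathtt{W}_{\nu,b,\eta}\in S^{\ast}(q_c)$ follows at once. I do not anticipate a genuine obstacle here: the computation is essentially bookkeeping, and the only point demanding care is the rescaling that converts the weighted convolution into a single ${}_1F_2$ with argument $\tfrac{\sqrt[3]{21}|\eta|}{4\sqrt[3]{4}}$, together with the (routine) appeal to absolute convergence of the entire ${}_1F_2$ series that licenses evaluating the Cauchy product at $z=1$.
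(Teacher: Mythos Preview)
Your proposal is correct and follows essentially the same route as the paper: apply \Cref{thm:anlytic-LS} to the coefficients of $\mathtt{W}_{\nu,b,\eta}$, evaluate the two convolution sums via the self-Cauchy-product identity \eqref{eqn:self-CP} to obtain the squared ${}_1F_2$ expressions displayed just before the theorem, and then read off the chain \eqref{eqn:hypo-Lem-GStruve} as the rewriting of \eqref{eqn:hypo-1-thm1} and \eqref{eqn:hypo-thm1}. Your added remarks on why $(\kappa_s)_k>0$ and on the absolute convergence of the entire ${}_1F_2$ series are sound and make explicit points the paper leaves implicit.
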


Suppose $\eta=\pm 1$, $b=1$, and $\kappa_s=\nu+3/2$. Now for a fixed $c \in (0, 1]$, the right-hand inequality in \eqref{eqn:hypo-Lem-GStruve} holds for $\nu> -1.06868$, and the left-hand inequality holds for 
$\nu> \nu_1$, where $\nu_1$ is the root of 
\[\left[\,{}_{1}F_{2}\!\left(1; \tfrac{3}{2},\ \nu+\tfrac{3}{2};\;\frac{\sqrt[3]{21}}{4\; \sqrt[3]{4}}\right)\right]^{2} - 1= c \left[\,{}_{1}F_{2}\!\left(1; \tfrac{3}{2},\ \nu+\tfrac{3}{2};\;\frac{1}{4}\right)\right]^{2}.\]
A list of values of $\nu_1$ for a fixed $c$ are tabulated in \Cref{tab:value-nu1}. 
\begin{table}[h]
   \begin{tabular}{|c|c|c|c|}
   \hline
 $c$ & $\nu_1$  &$c$ & $\nu_1$\\ \hline
 0.1 & 4.25508 & 0.6 & -0.633535 \\
 0.2 & 1.34049 &  0.7 & -0.778294 \\
 0.3 & 0.36084 &  0.8 & -0.887662 \\
 0.4 & -0.133344 & 0.9 & -0.973308 \\
 0.5 & -0.432457 &1 & -1.04226 \\
 \hline
\end{tabular}
    \caption{Value of $\nu_1$ for a fixed $c$. }
    \label{tab:value-nu1}
\end{table}

Observed that $S^{\ast}(q_c) \subset S^{\ast}$. In particular, taking $\eta= \pm 1$, $b=1$ and $c=1$ in \Cref{thm:Lem-GStruve}, we have the following special case
\begin{corollary}\label{cor:Lem-GStruve}
The function  $\mathtt{W}_{\nu, 1, \pm 1}(z) \in S^{\ast}(q_1) \subset S^{\ast}$ for $\nu> -1.04226$. 
\end{corollary}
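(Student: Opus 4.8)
The plan is to derive \Cref{cor:Lem-GStruve} directly from \Cref{thm:Lem-GStruve}. First I would substitute $\eta = \pm 1$, $b = 1$, and $c = 1$, so that $|\eta| = 1$ and $\kappa_s = \nu + \tfrac{3}{2}$. Under these choices the two-sided hypothesis \eqref{eqn:hypo-Lem-GStruve} splits into the pair of scalar conditions
\[
\left[\,{}_{1}F_{2}\!\left(1; \tfrac{3}{2},\ \nu+\tfrac{3}{2};\;\tfrac{\sqrt[3]{21}}{4\sqrt[3]{4}}\right)\right]^{2} - 1 < \left[\,{}_{1}F_{2}\!\left(1; \tfrac{3}{2},\ \nu+\tfrac{3}{2};\;\tfrac{1}{4}\right)\right]^{2}
\]
and
\[
\left[\,{}_{1}F_{2}\!\left(1; \tfrac{3}{2},\ \nu+\tfrac{3}{2};\;\tfrac{1}{4}\right)\right]^{2} < 2,
\]
and it remains to locate the range of $\nu > -1$ on which both hold simultaneously.

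For the upper condition I would exploit monotonicity in the parameter. Writing
\[
{}_{1}F_{2}\!\left(1; \tfrac{3}{2}, \nu + \tfrac{3}{2}; x\right) = \sum_{n \ge 0} \frac{(1)_n}{\left(\tfrac{3}{2}\right)_n\,\left(\nu + \tfrac{3}{2}\right)_n}\,\frac{x^n}{n!},
\]
one sees that each term with $n \ge 1$ is strictly decreasing in $\nu$ for fixed $x > 0$, since $(\nu + \tfrac{3}{2})_n$ increases with $\nu$. Hence the left-hand side of the upper condition is strictly decreasing in $\nu$, and the inequality holds exactly when $\nu$ exceeds the unique root $\nu = -1.06868$ of the associated equality.

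For the lower condition I would set
\[
g(\nu) := \left[\,{}_{1}F_{2}\!\left(1; \tfrac{3}{2}, \nu+\tfrac{3}{2}; \tfrac{\sqrt[3]{21}}{4\sqrt[3]{4}}\right)\right]^2 - 1 - \left[\,{}_{1}F_{2}\!\left(1; \tfrac{3}{2}, \nu+\tfrac{3}{2}; \tfrac{1}{4}\right)\right]^2
\]
and argue that $g(\nu) < 0$ precisely above its root. As $\nu \to \infty$ both hypergeometric factors tend to $1$, giving $g(\nu) \to -1 < 0$; as $\nu \to (-1)^+$ the factor with the larger argument $\tfrac{\sqrt[3]{21}}{4\sqrt[3]{4}} \approx 0.434 > \tfrac14$ dominates and $g$ turns positive, so $g$ changes sign, and the relevant threshold is the root $\nu_1 = -1.04226$ recorded for $c = 1$ in \Cref{tab:value-nu1}. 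Taking the more restrictive of the two bounds, $\nu > \max\{-1.06868,\, -1.04226\} = -1.04226$, then yields $\mathtt{W}_{\nu, 1, \pm 1}(z) \in S^{\ast}(q_1)$. The inclusion $S^{\ast}(q_1) \subset S^{\ast}$ is immediate, since $\phi_l(\mathbb{D}) = \{\sqrt{1+z} : z \in \mathbb{D}\}$ lies in the right half-plane, so any $f$ with $zf'(z)/f(z) \prec \sqrt{1+z}$ satisfies $\real\{zf'(z)/f(z)\} > 0$.

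The step I expect to be the main obstacle is the rigorous verification that $g$ changes sign \emph{only once}, so that the lower condition genuinely holds on the full half-line $\nu > \nu_1$ and not merely at the numerically computed root. Because $g$ is a difference of squares of two decreasing hypergeometric functions evaluated at different arguments, its monotonicity does not follow from the simple term-by-term argument that settles the upper condition; establishing it would require either a termwise comparison of the two series or a suitable convexity estimate in the parameter $\nu$, and this is where the genuine analytic effort lies beyond the numerical root-finding summarised in \Cref{tab:value-nu1}.
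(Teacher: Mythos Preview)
Your proposal is correct and follows essentially the same route as the paper: specialise \Cref{thm:Lem-GStruve} to $\eta=\pm1$, $b=1$, $c=1$, reduce \eqref{eqn:hypo-Lem-GStruve} to the two scalar inequalities, locate their respective thresholds $-1.06868$ and $-1.04226$ numerically, and take the maximum. If anything you are more careful than the paper, which simply asserts the half-line ranges without discussing monotonicity; your honest flag about the uniqueness of the sign change of $g$ is a gap the paper itself leaves unaddressed.
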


The generalized Struve function $\mathtt{W}_{\nu, b, c}(z)$ studied  in \cite{Yagmur-Orhan, Orhan-Yagmur}. Among various geometric properties, it was shown that $\mathtt{W}_{\nu, b, \eta}(z)$ is starlike  under the condition  
\[
\kappa_s >\frac{9 + \sqrt{ 7}}{24} |\eta|= 0.48524 |\eta| .
\]  
In particular, for  $b = 1$, and $\eta = \pm 1$, this implies that the Struve function $\mathcal{S}(z)=\mathtt{W}_{\nu, 1, 1}(z)$ and the modified Struve $\mathcal{L}(z)=\mathtt{W}_{\nu, 1, -1}(z)$ are starlike if $\nu > -0.953204$. Clearly, \Cref{cor:Lem-GStruve} provides better range of $\nu$ for starlikeness. 
\begin{remark}
  We conclude this section by noting that other parameter-based generalizations of Struve functions, such as those discussed in \cite{Ali-Mondal-Nisar, sarkar2024geometric}, may likewise be analyzed for lemniscate starlikeness, though detailed proofs are omitted here.
\end{remark}

\subsection{Example of Error functions}\label{sec:Appl-error}
The \emph{error function}, denoted by $\operatorname{erf}(z)$, is an important transcendental function that arises in diverse areas such as diffusion theory, statistics, and heat conduction. It is defined by the integral representation.
\begin{equation}\label{eq:erf-def}
\operatorname{erf}(z) = \frac{2}{\sqrt{\pi}} \int_{0}^{z} e^{-t^{2}}\, dt, \qquad z \in \mathbb{C}.
\end{equation}
This function is entire and odd, satisfying $\operatorname{erf}(-z) = -\operatorname{erf}(z)$. The complementary error function is given by $\operatorname{erfc}(z) = 1 - \operatorname{erf}(z)$. Both $\operatorname{erf}(z)$ and $\operatorname{erfc}(z)$ play essential roles in probability and mathematical physics, particularly in the formulation of the Gaussian integral and cumulative distribution functions of the normal law.

By expanding $e^{-t^2}$ in its Maclaurin series and integrating term by term, one obtains the series representation
\begin{equation}\label{eq:erf-series}
\operatorname{erf}(z) = \frac{2}{\sqrt{\pi}} \sum_{n=0}^{\infty} \frac{(-1)^{n} z^{2n+1}}{n!\,(2n+1)}
= \frac{2}{\sqrt{\pi}} \left( z - \frac{z^{3}}{3} + \frac{z^{5}}{10} - \frac{z^{7}}{42} + \cdots \right),
\end{equation}
which converges absolutely for all $z \in \mathbb{C}$. This confirms that $\operatorname{erf}(z)$ is an entire function of order $1$ and type $2$. Its derivative is given by
\[
\operatorname{erf}'(z) = \frac{2}{\sqrt{\pi}} e^{-z^2},
\]
which shows that $\operatorname{erf}(z)$ is monotonic and bounded on the real line, with $\operatorname{erf}(z) \to 1$ as $z \to +\infty$ and $\operatorname{erf}(z) \to -1$ as $z \to -\infty$.

In the context of geometric function theory, we consider the following normalization:

\begin{align}\label{eq:erf-normalizations}
\mathtt{F}_{5}(z) = \frac{\sqrt{z} \sqrt{\pi}}{2 }\operatorname{erf}(\sqrt{z})=  \sum_{n=1}^{\infty} \frac{(-1)^{n-1} z^{n}}{(n-1)!\,(2n-1)}.
\end{align}

Now, for $\mathtt{F}_{5}$, the right-hand side of hypothesis \eqref{eqn:hypo-1-thm1} is equivalent to 
\begin{align}
  \sum_{n=1}^{\infty} \sum_{k=0}^n \frac{1}{k!\,(2k+1)} \frac{1}{(n-k)!\,(2(n-k)+1)}.   
\end{align}
 Use of the identity \eqref{eqn:self-CP} gives
\begin{align}
  \sum_{n=1}^{\infty} \sum_{k=0}^n \frac{1}{k!\,(2k+1)} \frac{1}{(n-k)!\,(2(n-k)+1)}= \left(\sum_{n=0}^{\infty} \frac{1}{n!\,(2n+1)} \right)^2 -1=1.13935>1. 
\end{align}
Thus, \Cref{thm:anlytic-LS} is not applicable on $\mathtt{F}_{5}$.

Recall, $\mathfrak{H} \ast \mathfrak{K}$, the well-known Hadamard product of two analytic functions 
 $\mathfrak{H}(z) := \sum_{k=0}^{\infty} a_k z^k $ and $\mathfrak{K}(z):= \sum_{k=0}^{\infty} b_k z^k $. Then 
\begin{align*}
   (\mathfrak{H} \ast \mathfrak{K})(z )= \sum_{k=0}^{\infty} a_k b_k z^k.
\end{align*}
Let us now consider the function $\mathtt{F}_{6}(z)=\mathtt{F}_{5}(z) \ast  (e^z-1)$. Clearly, 
\begin{align}\label{eq:erf-normalizations}
\mathtt{F}_{6}(z) =   \sum_{n=1}^{\infty} \frac{(-1)^{n-1} z^{n}}{(n-1)!\,(2n-1) n!}.
\end{align}
In this case, the numerical sum indicates that 
the right-hand side of hypothesis \eqref{eqn:hypo-1-thm1} is equivalent to 
\begin{align*}
  &\sum_{n=1}^{\infty} \sum_{k=0}^n \frac{1}{k!\,(2k+1) (k+1)!} \frac{1}{(n-k)!\,(2(n-k)+1) (n-k+1)!}\\
  &=  \left(\sum_{n=0}^{\infty} \frac{1}{n!\,(2n+1)(n+1)!} \right)^2 -1=0.402721. 
\end{align*}
Similarly, for $c=1$, the right-hand side of hypothesis \eqref{eqn:hypo-thm1} is equivalent to 
\begin{align*}
  &\sum_{n=1}^{\infty} \sum_{k=0}^n \frac{21^{\tfrac{k}{3}}}{4^{\tfrac{k}{3}}\, k!\,(2k+1) (k+1)!} \frac{21^{\tfrac{n-k}{3}}}{4^{\tfrac{n-k}{3}}\,(n-k)!\,(2(n-k)+1) (n-k+1)!} \\
  &\quad =  \left(\sum_{n=0}^{\infty} \frac{21^{\tfrac{n}{3}}}{4^{\tfrac{n}{3}}\, n!\,(2n+1)(n+1)!} \right)^2 =0.407896. 
\end{align*}
Hence, we have the following result. 
\begin{theorem}
    The function $\mathtt{F}_{6}(z) \in S^{\ast}(q_1)$. In general, $\mathtt{F}_{6}(z) \in S^{\ast}(q_c)$ for $ c_0 \approx 0.577889 < c \le 1$. Here, $c_0$ is the root of the equation 
    \[\, _1F_2\left(\frac{1}{2};\frac{3}{2},2;\frac{\sqrt[3]{21}}{2^{2/3}}\right){}^2-c \, _1F_2\left(\frac{1}{2};\frac{3}{2},2;1\right){}^2=1.\]
\end{theorem}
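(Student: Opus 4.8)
The plan is to verify the two hypotheses of \Cref{thm:anlytic-LS} for the sequence $a_n = \frac{(-1)^{n-1}}{(n-1)!\,(2n-1)\,n!}$, exactly as in the Struve and cross-product-of-Bessel examples above. Here $a_1 = 1$, so the normalization is met, and $|a_{k+1}| = \frac{1}{k!\,(2k+1)\,(k+1)!}$. The first step is to identify the generating function of these moduli as a hypergeometric series: using the Pochhammer reductions $\frac{1}{2k+1} = \frac{(1/2)_k}{(3/2)_k}$ and $(k+1)! = (2)_k$, one finds $\sum_{k \ge 0} |a_{k+1}|\, z^k = {}_1F_2\!\left(\tfrac{1}{2}; \tfrac{3}{2}, 2; z\right)$. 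This is the identification that feeds everything else.

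The second step is to collapse both hypothesis sums through the self-convolution identity \eqref{eqn:self-CP}. For \eqref{eqn:hypo-1-thm1} this gives directly $\sum_{n \ge 1}\sum_{k=0}^{n} |a_{k+1}||a_{n-k+1}| = {}_1F_2\!\left(\tfrac{1}{2};\tfrac{3}{2},2;1\right)^2 - 1 \approx 0.402721 < 1$, as already recorded. For \eqref{eqn:hypo-thm1} I would first split the weight $(21/4)^{n/3} = (21/4)^{k/3}(21/4)^{(n-k)/3}$ and absorb each factor into the corresponding coefficient; \eqref{eqn:self-CP} then yields $\sum_{n \ge 1}(21/4)^{n/3}\sum_{k=0}^{n}|a_{k+1}||a_{n-k+1}| = {}_1F_2\!\left(\tfrac{1}{2};\tfrac{3}{2},2;(21/4)^{1/3}\right)^2 - 1$, with $(21/4)^{1/3} = \sqrt[3]{21}/2^{2/3}$.

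With $P := {}_1F_2\!\left(\tfrac{1}{2};\tfrac{3}{2},2;\sqrt[3]{21}/2^{2/3}\right)^2$ and $Q := {}_1F_2\!\left(\tfrac{1}{2};\tfrac{3}{2},2;1\right)^2$, the left-hand side of \eqref{eqn:hypo-thm1} equals $(P-1) - c(Q-1)$, so the required inequality $(P-1) - c(Q-1) < c$ is equivalent to $P - cQ < 1$. At $c = 1$ this is $P - Q \approx 0.407896 < 1$, so both hypotheses hold and $\mathtt{F}_6 \in S^{\ast}(q_1)$. For general $c$, the map $c \mapsto P - cQ$ is strictly decreasing (as $Q > 0$), hence $P - cQ < 1$ holds exactly for $c > c_0$, where $c_0$ is the unique root of $P - cQ = 1$, i.e.\ of the displayed equation, numerically $c_0 \approx 0.577889$. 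Since \eqref{eqn:hypo-1-thm1} does not involve $c$ and has already been confirmed, \Cref{thm:anlytic-LS} delivers $\mathtt{F}_6 \in S^{\ast}(q_c)$ for every $c_0 < c \le 1$.

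There is no real analytic difficulty; the work is bookkeeping. The two places that demand care are producing the correct parameters $\big(\tfrac{1}{2};\tfrac{3}{2},2\big)$ in the hypergeometric identification of step one, and correctly subtracting the $n = 0$ term (the lone $-1$) when passing from the full convolution square to the tail sum over $n \ge 1$. The value $c_0$ itself is located numerically, as in the earlier examples, and it is the monotonicity of $P - cQ$ that promotes this single root to the full admissible interval $(c_0, 1]$.
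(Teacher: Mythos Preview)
Your proof is correct and follows the same route as the paper: identify the coefficients of $\mathtt{F}_6$, feed them through the self-convolution identity \eqref{eqn:self-CP}, and verify the two hypotheses of \Cref{thm:anlytic-LS} numerically. You actually supply two details the paper leaves implicit: the closed-form identification $\sum_{k\ge 0}|a_{k+1}|z^k = {}_1F_2(\tfrac12;\tfrac32,2;z)$ via the Pochhammer reductions, and the monotonicity argument $c\mapsto P-cQ$ that upgrades the single numerical root $c_0$ to the full interval $(c_0,1]$; the paper simply asserts the general-$c$ statement.
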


\section{Conclusion} \label{sec:conclusion}
We conclude by noting that several other normalized special functions may also be included in the class $s^{\ast}(q_c)$. 
 For the sake of brevity, and since their analysis proceeds through similar arguments, we omit the detailed statements and proofs. A few notable examples are listed below:

\begin{enumerate}
    \item The product of Bessel and modified Bessel \cite{Baricz-product-Bessel},

\item Lommel function \cite{MR3596935} and Dini function \cite{Baricz-Deniz-Dini},
\item Wright functions  \cite{JKP-2, Das} and generalized Mittag-Leffler function \cite{Baricz-Anuja}, 
\item $q$-Bessel functions\cite{qb-Baricz1}.
\end{enumerate}

Building upon the findings of \cite{Zayed-Bulboacua} concerning the lemniscate convexity of generalized Bessel functions, one may generalize the underlying ideas to a wider class of normalized analytic functions. Such an extension can be effectively applied to the special functions analyzed in this work. The authors anticipate that an improvement of certain inequalities used in \cite{Zayed-Bulboacua} would lead to more accurate characterizations of lemniscate convex functions. This refinement will be considered as part of future research. 

\section*{Decleration}
\subsection*{Competing interests:} {The authors declare that they have no competing interests.}
\subsection*{Funding:} {Not Applicable.}
\subsection*{Availability of data and materials:} {No datasets were generated or analysed during the current study.}
\subsection*{Author's contribution:} {All authors contributed equally to the writing of this paper. They read and approved the final version of the paper.}

\bibliographystyle{unsrt}
\bibliography{references}

\end{document}